\newtheorem{theo}{Theorem}
\newtheorem{lem}{Lemma}[section]
\newtheorem{defi}{Definition}[section]
\newtheorem{rmk}{Remark}[section]
\newcommand{\eps}{\varepsilon}
\newcommand{\R}{\mathbb{R}}
\newcommand{\Mm}{\mathcal{M}^+}
\renewcommand{\o}{\overline}
\newcommand{\p}{\partial}
\newcommand{\E}{\mathcal{E}}
\numberwithin{equation}{section}
\DeclareMathOperator{\dive}{div}
\DeclareMathOperator{\grad}{grad}
\def\be{\begin{equation}}
\def\ee{\end{equation}}
\newcommand{\rd}{\mathrm{d}}
\newcommand{\ub}{\mathbf{u}}
\newcommand{\fb}{\mathbf{f}}
\newcommand{\mb}{\mathbf{m}}
\renewcommand{\d}{\mathtt{d}}
\newcommand{\D}{\mathtt{D}}
\newcommand{\keywords}[1]{\par\noindent
{\small{\em Keywords\/}: #1}}
\begin{document}
\title{A fitness-driven cross-diffusion system from polulation dynamics as a gradient flow}
\date{}
\author{Stanislav Kondratyev, L\'{e}onard Monsaingeon and Dmitry Vorotnikov}

\maketitle
\begin{abstract}
We consider a fitness-driven model of dispersal of $N$ interacting populations, which was previously studied merely in the case $N=1$.
Based on some optimal transport distance recently introduced, we identify the model as a gradient flow in the metric space of Radon measures.
We prove existence of global non-negative weak solutions to the corresponding system of parabolic PDEs, which involves degenerate cross-diffusion.
Under some additional hypotheses and using a new multicomponent Poincar\'e-Beckner functional inequality, we show that the solutions converge exponentially to an ideal free distribution in the long time regime.
\end{abstract}
\tableofcontents

\vspace{10pt}

\keywords{cross-diffusion, optimal transport, gradient flow, ideal free distribution, entropy-entropy production inequality}

\vspace{5pt}

\noindent\textit{MSC [2010]}: 35K51, 35K65, 35Q92, 49Q20, 58B20, 92D40

\section{Introduction}

Living organisms
tend to form distributional patterns but not to be 
arranged either uniformly or randomly. 
This spatial heterogeneity plays a crucial role in 
ecological theories and their practical applications. It should be taken into account when modeling 
epidemics, ecological catastrophes, competition, adaptation, 
maintenance of species diversity, parasitism, population growth and decline, social behaviour, and so on \cite{LF89}. In order to understand the way the species distribute themselves it is important to pay attention to their dispersal strategies.  

In this paper we study a system of PDEs for several interacting populations whose dispersal strategy is determined by a local intrinsic characteristic of organisms called \emph{fitness} (cf. \cite{cosner05,cos13}), essentially the signed difference between available resources and their consumption by the individuals. 
The fitness manifests itself as a growth rate, and simultaneously affects the dispersal as the species move along its gradient towards the most favorable environment. The equilibrium when the fitnesses of all populations vanish can be referred to as the \emph{ideal free distribution} \cite{FC69,fr72}, since no net movement of individuals occurs in this case. 
We are thus going to study the system
\begin{equation}
\label{eq:zveriint}
\p_t u_i = -\operatorname{div} (u_i \nabla f_i) + u_i f_i,\qquad x \in \Omega, t 
> 0,\qquad\ i=1,\dots,N,
\end{equation}
of $N$ interacting species located in a bounded domain $\Omega\subset\R^d$.
For prescribed resources $\mb=(m_i(x))$ we assume a generic linear relation between the population densities $\ub=(u_i(t,x))$ and their corresponding fitnesses $\fb=(f_i(t,x))$:
\begin{equation}
 \label{e:fbint}
 \fb=\mb-A \ub.
 \end{equation}
 We assume that both the matrix $A$ and the vector $\mb$ depend on $x\in\Omega$, thus our model is spatially heterogeneous.
 Formula \eqref{e:fbint} expresses the idea that the fitness is determined by the difference between the available resources $\mb$ and the animals' consumption $A\ub$.
 
 The mathematical difficulties which we will face when studying the parabolic system \eqref{eq:zveriint}-\eqref{e:fbint} come from the fact that it involves both cross-diffusion (for $N>1$) and degenerate diffusion.   
In the case of merely one population ($N=1$), the fitness-driven dispersal model 
\eqref{eq:zveriint},  \eqref{e:fbint} was suggested in \cite{mc90,cosner05} and studied in \cite{CW13,KMV15} (see also \cite{ccl08}).
Related fitness-driven two-species models were investigated in \cite{ccl13,ltw14} where one population uses the fitness-driven dispersal strategy and the other diffuses freely or does not move at all. In the case when $A$ is a constant matrix, $\mb\equiv 0$, and the second (reaction) term $u_i f_i$ in \eqref{eq:zveriint} is omitted,  system \eqref{eq:zveriint},  \eqref{e:fbint} is equivalent to the degenerate cross diffusion system which was recently analyzed in \cite{Mon14} with an application to seawater intrusion. 
Another population dynamics model which involves cross-diffusion is the Shigesada, Kawasaki and Teramoto model \begin{equation}
\label{eq:akt}
\p_t u_i = \Delta \left(u_i\left(d_i+\sum\limits_{i=1}^N a_{ij} u_j\right)\right) + u_i \left(\left(c_i-\sum\limits_{i=1}^N b_{ij} u_j\right)\right),\ i=1,\dots,N,
\end{equation} where the coefficients are non-negative constants. It has been extensively studied (mostly for $N=2$) from the point of view of well-posedness and long-time behaviour (see, e.g., \cite{CLY04,CJ06,J15,JZ15,LW15,LN96} and the references therein). Note that the constants $d_i$ in \eqref{eq:akt} are usually assumed to be strictly positive, hence this problem is not as degenerate as our system \eqref{eq:zveriint},  \eqref{e:fbint}.

On the other hand, being inspired by the ideas of the Monge-Kantorovich optimal transport theory \cite{villani03topics,villani08oldnew}, we have recently constructed in \cite{KMV15} a new distance on the space of non-negative finite Radon measures.
The same distance was almost simultaneously introduced in the independent works \cite{peyre_1_2015,LMS_big_2015} (see also subsequent \cite{LMS_small,peyre_2,MG16}).
This metric generates a formal (infinite dimensional) Riemannian structure on this space, and provides first- and second-order differential calculus in the spirit of Otto \cite{otto01}.
With this differential calculus at hand we were able to identify in \cite{KMV15} the scalar model as a metric gradient flow, which allowed us to prove long-time convergence to the ideal free distribution with explicit exponential rates.
The goal of this paper is to extend our previous results to the multispecies case $N>1$: we will observe that the problem 
\eqref{eq:zveriint},  \eqref{e:fbint} can be interpreted as a formal gradient flow of some driving entropy functional on the Cartesian product of $N$ spaces of non-negative Radon measures with respect to this geometric structure (provided that the matrix $A(x)$ is symmetric).
Roughly speaking, the entropy $\E(\ub)\approx \|\ub -\ub^\infty\|^2_{L^2(\Omega)}\approx\|\fb\|^2_{L^2(\Omega)}$ will quantify the deviation from the ideal free distribution $\ub^\infty$, characterized by $\fb=0$.
In this framework and under some general structural assumptions on $A$ and $\mb$, we will prove existence of non-negative weak solutions to our problem (which to the best of our knowledge was known so far only in the scalar case \cite{CW13}), and show that, at least for subcritical initial entropies,
all the species persist and exponentially converge to the ideal free distribution.
All our arguments will have a strong optimal transport flavor, but, due to the multicomponent nature of the problem preventing our entropy functional from being geodesically convex, the abstract results for metric gradient flows in \cite{AGS06} do not apply directly.
As a consequence some technical work will be needed to justify the formal Riemannian computations and a priori estimates (in particular some chain rules in weighted spaces), and we will argue using several approximations and regularizations in a more standard PDE framework.
\\

The paper is organized as follows: In Section~\ref{section:animals} we impose precise structural assumptions and state our results.
In Section~\ref{section:gradient_flow} we describe the optimal transport distance on the product measure space $\mathcal M^+(\Omega)^N$, discuss the induced formal Riemannian structure and calculus, and highlight the gradient-flow structure of the system.
We also present two formal but crucial computations, consisting of a priori entropy and gradient estimates to be derived more rigorously later on.
Section~\ref{section:existence} is devoted to the existence of weak solutions, whose proof will involve three successive regularizations.
The last Section~\ref{section:long_time_convergence} contains the proof of the long-time convergence, based on a new vectorial Poincar\'e-Beckner type inequality which serves as an entropy-entropy production inequality for our gradient flow. The Appendix contains a technical functional analytic lemma.

\section{Conventions and main results} \label{section:animals}
Throughout the whole paper we assume that $\Omega\subset \R^d$ is an open, connected, bounded domain with sufficiently smooth boundary.
We always denote vector functions with values in $\R^N
$ by bold letters, e.g., $\fb=(f_i(t,x))$.
We assume that we are given a function $\mb=(m_i(x)):\overline\Omega\to \R^N$ and a symmetric positive-definite matrix function $A=(a_{ij}(x)):\overline\Omega\to \R^{N\times N}$, and without further mention we shall always assume the uniform ellipticity condition
$$
0<\lambda_A \leq A(x)\leq \Lambda_A,\qquad x\in \overline\Omega
$$
for some structural constants $\lambda_A\leq \Lambda_A$.
We assume that $A$ and $\mb$ are sufficiently smooth. Note that we do not assume that all the components of $A$ and $\mb$ are non-negative. 

All the integrals are always implicitly written with respect to the Lebesgue measures $\rd x,\rd t,$ or $\rd x\rd t$.  The symbol $\Mm(\Omega)$ denotes the space of (non-negative) 
Radon measures on $\Omega$. Parentheses denote the scalar product in $L^2(\Omega)$, $L^2(\Omega)^N$ or $L^2(\Omega)^{N\times d}$. The symbol $\mathcal{C}_w(\mathcal J; X)$ stands for the space of weakly continuous (resp., narrowly continuous) curves with defined on $\mathcal J\subset \R$ and with values in $X=L^2(\Omega)$ (resp., $X=\Mm(\Omega)$).


We study the system
\begin{equation}
\label{eq:zveri}
\left\{
\begin{array}{ll}
\displaystyle
\p_t u_i = -\operatorname{div} (u_i \nabla f_i) + u_i f_i, & x \in \Omega,\, t 
> 0,\ i=1,\dots,N, 
\\
u_i
\frac{
\partial f_i
}{
\partial \nu
}
=
0
,
&
x \in \partial \Omega, \ t > 0,
\\
u_i(0, x) = u_{i}^0(x), & x \in \Omega
.
\end{array}
\right.
\end{equation}
As already mentioned in the introduction and without further mention, we always denote the fitness by
\begin{equation*}
 \label{e:fb}
 \fb=\mb-A \ub,
 \end{equation*}
 and the ideal free distribution $\ub^\infty(x)$ is obtained by solving $\fb =0$, i-e
 $$
 \ub^{\infty}:=A^{-1}\mb\qquad \Leftrightarrow \qquad \fb=0.
 $$
 Note that at this stage $\ub^\infty$ can have negative components and may therefore be biologically irrelevant (but it will be non-negative later on with extra structural conditions on $A,\mb$), and that $\ub^{\infty}$ is trivially a steady state of \eqref{eq:zveri} with $f_i\equiv 0$.

We will show in Section~\ref{section:gradient_flow} that \eqref{eq:zveri} is the gradient flow of the entropy 
\begin{multline} \label{e:15}
\E(\mathbf u)=\frac 12 \int_\Omega A(\ub-\ub^{\infty})\cdot(\ub-\ub^{\infty})\\ = \frac 1 2\int_\Omega A^{-1} \fb \cdot \fb=\frac 12 \int_\Omega (\ub^{\infty}-\ub)\cdot\fb, \quad \ub\in L^2(\Omega)^N
\end{multline}
with respect to some optimal transport distance.

\begin{defi}
Let $\mathbf u^{0}\in L^2(\Omega)^N$, $u^0_{i}\geq 0$.
A vector function
$$
\ub\in \mathcal{C}_w([0,\infty); L^2(\Omega)^N)\cap L^2_{loc}([0,+\infty); H^1(\Omega)^N),
$$
$u_i\geq 0$, is called a \emph{non-negative weak solution} to problem \eqref{eq:zveri}  provided
\begin{equation*}
\frac d {dt} \int_\Omega  \ub \cdot \mathbf w= \sum_{i=1}^N  \int_\Omega \left( u_i \nabla f_i\cdot \nabla w_i +  u_i  f_i w_i\right), \quad \forall \mathbf w \in (\mathcal{C}^1(\o\Omega))^N, \label{eq:wsol}
\end{equation*}
in the sense of distributions $\mathcal D'(0,\infty)$, and the initial condition $\ub(0)=\ub^0$ is satisfied weakly in the space $L^2(\Omega)^N$.
\end{defi}

\begin{theo}[existence of non-negative weak solutions] \label{T:ex1}
Let $\mathbf u^{0}\in L^2(\Omega)^N$ with $u^0_{i}\geq 0$.
There exists a non-negative weak solution
\begin{equation*} 
\label{eq:solclass}
\mathbf{u}\in L^\infty(0,+\infty; L^2(\Omega)^N)\bigcap \mathcal{C}_w([0,+\infty); L^2(\Omega)^N)
\bigcap L^{2}_{loc}([0,+\infty); H^1(\Omega)^N)
\end{equation*}
 to problem \eqref{eq:zveri}, satisfying the \emph{Entropy-Dissipation-Inequality}
\begin{equation}
\label{eq:EDI}
\E(\ub(t_1))+\sum\limits_i\int_{t_0}^{t_1}\int_{\Omega} u_i(|\nabla f_i|^2+|f_i|^2)\leq \E(\ub(t_0))
\quad
\mbox{for a.e. }0\leq t_0\leq t_1.
\end{equation}
\end{theo}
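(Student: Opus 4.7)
The plan is to construct solutions by regularizing \eqref{eq:zveri} into a strictly parabolic, non-degenerate system, applying standard quasilinear theory, and then removing the regularizations using the a priori estimates derived from the formal gradient-flow structure announced in Section~\ref{section:gradient_flow}. Specifically, I would introduce three regularizations: (i) smoothed and strictly positive initial data $\ub^{0,\sigma}\in C^\infty(\o\Omega)^N$ with $u_i^{0,\sigma}\geq \sigma$ and $\ub^{0,\sigma}\to \ub^0$ in $L^2$; (ii) a non-degenerate mobility obtained by replacing $u_i$ by $u_i+\eta$ in the flux $u_i\nabla f_i$; and (iii) a linear regularization $\varepsilon\Delta u_i$ making the principal part uniformly elliptic. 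For fixed $(\varepsilon,\eta,\sigma)>0$ the resulting system is strongly parabolic quasilinear with smooth coefficients; existence of a classical, strictly positive solution $\ub^{\varepsilon,\eta,\sigma}$ follows by a Schauder fixed-point argument applied to the map $\mathbf v\mapsto\ub$, where $\ub$ solves the linear problem with the fitness $\mb-A\mathbf v$ frozen, combined with the weak maximum principle for non-negativity and standard parabolic regularity for strict positivity.

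\medskip

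Next I would carry out the EDI-type estimate along the regularized flow, mimicking the formal computation of Section~\ref{section:gradient_flow}. Using $A(\ub-\ub^\infty)=-\fb$ and the symmetry of $A$, one computes
\begin{equation*}
\frac{d}{dt}\E(\ub^{\varepsilon,\eta,\sigma}) + \sum_i\int_\Omega (u_i+\eta)|\nabla f_i|^2 + \sum_i\int_\Omega u_i f_i^2 = \varepsilon\sum_i\int_\Omega \nabla f_i\cdot\nabla u_i,
\end{equation*}
and the algebraic identity $A\nabla\ub = \nabla\mb-(\nabla A)\ub-\nabla\fb$ together with the ellipticity of $A$ allows me to absorb the right-hand side into $-\tfrac{\varepsilon\lambda_A}{2}\|\nabla\ub\|_{L^2}^2$ plus a lower-order term in $\ub$. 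Gronwall then yields, uniformly in all regularization parameters, $\ub\in L^\infty_t L^2_x$, $\sqrt{u_i+\eta}\,\nabla f_i\in L^2_{t,x}$, $\sqrt{u_i}\,f_i\in L^2_{t,x}$, and a regularized version of the EDI.

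\medskip

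To pass to the limit I would proceed successively in $\varepsilon\to 0$, then $\eta\to 0$, then $\sigma\to 0$. Combining the uniform bounds above with a control on $\partial_t\ub^{\varepsilon,\eta,\sigma}$ in a negative Sobolev space extracted directly from the weak formulation, the Aubin--Lions lemma furnishes strong $L^2_{t,x}$ and a.e. convergence along a subsequence. The nonlinear flux is handled by factoring $(u_i+\eta)\nabla f_i = \sqrt{u_i+\eta}\cdot\sqrt{u_i+\eta}\,\nabla f_i$ and combining strong convergence of $\sqrt{u_i+\eta}$ (from a.e. convergence and uniform integrability given the $L^\infty_t L^2_x$ bound) with weak convergence of $\sqrt{u_i+\eta}\,\nabla f_i$ in $L^2$; the reaction term $u_i f_i$ is handled analogously. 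Weak lower semicontinuity of the dissipation functionals preserves the EDI, and weak time-continuity in $L^2$ follows from the uniform $L^\infty_t L^2_x$ bound combined with the equation.

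\medskip

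The main technical obstacle is the $L^2_{loc}(H^1)$ regularity of $\ub$, since the entropy dissipation only controls the weighted quantity $\sqrt{u_i}\,\nabla f_i$ rather than $\nabla f_i$ itself, and because the asymmetric matrix with entries $u_i a_{ij}$ that would appear in a naive energy estimate obtained by testing against $u_i$ is not in general positive semidefinite. Bridging this gap will likely require combining the $L^\infty_t L^2_x$ bound on $\ub$ with the algebraic identity $A\nabla\ub=\nabla\mb-(\nabla A)\ub-\nabla\fb$ and the ``chain rules in weighted spaces'' alluded to in the introduction, to convert weighted estimates on $\nabla\fb$ into unweighted estimates on $\nabla\ub$. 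Ensuring that this $H^1$ control, the identification of the degenerate flux, and the lower semicontinuity of the dissipation all survive the three successive passages is the delicate heart of the argument.
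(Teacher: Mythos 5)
Your overall strategy (three successive regularizations, an entropy--dissipation estimate obtained by testing with $-\fb$, Aubin--Lions compactness, strong-weak limits for the products $u_i\nabla f_i$, lower semicontinuity of the dissipation) matches the paper's, but two of your regularization/estimation choices contain genuine gaps.

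First, and most importantly, you flag the $L^2_{loc}(H^1)$ bound on $\ub$ as ``the delicate heart'' but leave it unresolved, and the route you sketch cannot work: the dissipation only controls the weighted quantity $\iint u_i|\nabla f_i|^2$, and no manipulation of the identity $A\nabla\ub=\nabla\mb-(\nabla A)\ub-\nabla\fb$ converts this into an unweighted bound on $\nabla\fb$ (hence on $\nabla\ub$) on the set where $u_i$ degenerates. The missing idea is the flow-interchange estimate announced in Section~\ref{section:gradient_flow}: test the $i$-th equation with a truncated version of $\log u_i$. The drift term then produces $\iint \nabla u_i\cdot\nabla f_i$, and after writing $\nabla f_i=\nabla m_i-\nabla (A\ub)_i$ and summing over $i$, the cross-diffusion contributes $\iint A\nabla\ub:\nabla\ub\geq\lambda_A\iint|\nabla\ub|^2$ by positive-definiteness of $A$, while the Boltzmann entropies $\mathcal H(u_i)$ and the remaining subquadratic terms are controlled by the $L^\infty_tL^2_x$ bound and by the dissipation itself. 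Without this (or an equivalent device) you cannot give meaning to $\nabla f_i$, hence to the flux $u_i\nabla f_i$, in the limit, and the solution class in the theorem is out of reach.

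Second, your nondegenerate mobility $u_i+\eta$ destroys exactly the structure that yields non-negativity. For the flux $u_i\nabla f_i$, testing with $v_i=\min\{u_i,0\}$ works because the mobility vanishes on the set where $v_i\neq 0$; this is how the paper proceeds, using the truncation $\tilde u_i=\max(0,\min(M,u_i))$, which is zero for $u_i\leq 0$ and simultaneously tames the $L^1$ products for fixed $M$. With mobility $u_i+\eta$ the residual flux $\eta\nabla f_i$ persists where $u_i=0$, and since $f_i$ couples all components through $A$ there is no maximum principle for the cross-diffusion system: the ``weak maximum principle for non-negativity'' you invoke is simply not available. You would need to replace the $\eta$-shift by a truncation, or some other device preserving the degeneracy of the mobility at $u_i=0$, for the sign to survive the construction. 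The remaining ingredients of your plan (strong-weak limits via the factorization $\sqrt{u_i}\cdot\sqrt{u_i}\,\nabla f_i$, lower semicontinuity of $\iint u_i(|\nabla f_i|^2+|f_i|^2)$) are in the right spirit and correspond to the paper's use of a compactness lemma for vector fields in varying $L^2(\rd\mu^k)$ spaces.
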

\begin{rmk} We were not able to prove any uniqueness result due to the lack of geodesic convexity, which usually gives contractivity in the metric sense and thus uniqueness; we therefore believe that any hypothetical proof of uniqueness cannot come from standard mass transport arguments and should rely on some PDE approach. \end{rmk}

To fix the ideas and improve readability, we assume that $A$ does not depend on $x$.
The generalization to the $x$-dependent case would require technical work and employs the fact that the last sum in the expansion
$
 \nabla f_i=\nabla m_i-\sum_{j}a_{ij}\nabla u_j -\sum_{j}u_j\nabla a_{ij}
 $
 is of lower order with respect to the penultimate one, but all the arguments below would carry through with minor modifications.

With an additional assumption, we obtain long-time convergence $\ub(t)\to \ub^{\infty}$ to the ideal free distribution with survival of all the species:
\begin{theo}[long-time behavior] \label{T:lt1}
Let $\ub^0$ be as in Theorem \ref{T:ex1}, and assume in addition that the structural hypothesis \eqref{eq:am4} holds.
Then $\ub^{\infty}(x)> 0$ (componentwise), and there exists $E^*\equiv E^*(A,\mb)>0$ such that, for any subcritical initial datum $\E(\ub^0)<E^*$, our solution $\ub$ converges exponentially to $\mathbf u^\infty$ as
\begin{equation}
\label{eq:din}
\E(\mathbf u(t))\leq e^{-\gamma t} \E(\mathbf u^{0})
\end{equation}
for all $t\geq 0$ and some $\gamma\equiv \gamma(\ub^0,\mb,A,\Omega)>0$.
\end{theo}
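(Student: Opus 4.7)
The overall strategy is the classical entropy method for gradient flows: I shall establish a functional entropy--entropy production inequality
\[
D(\ub) := \sum_{i=1}^N\int_\Omega u_i\bigl(|\nabla f_i|^2+|f_i|^2\bigr) \geq \gamma\,\E(\ub)
\]
valid along the subcritical trajectories under consideration. Since \eqref{eq:EDI} can be read as $\E(\ub(t_1))+\int_{t_0}^{t_1}D(\ub)\leq \E(\ub(t_0))$, combining it with the above bound and the fact that $t\mapsto\E(\ub(t))$ is non-increasing yields $\tfrac{d}{dt}\E(\ub(t))\leq -\gamma\,\E(\ub(t))$ in the distributional sense, whence \eqref{eq:din} follows by an elementary Gr\"onwall argument.

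The preliminary steps are short. First I would deduce from the structural assumption \eqref{eq:am4} that $\ub^\infty=A^{-1}\mb$ is strictly positive componentwise on $\overline\Omega$, which is the only place where \eqref{eq:am4} is really needed and which provides a uniform pointwise lower bound $\ub^\infty(x)\geq c_\infty>0$. Second, since \eqref{eq:EDI} implies the entropy is non-increasing, the subcriticality $\E(\ub(t))<E^*$ automatically propagates to all later times; because $\E(\ub)\sim \|\ub-\ub^\infty\|_{L^2}^2$ with constants depending only on $\lambda_A,\Lambda_A$, choosing $E^*$ sufficiently small confines the trajectory to a prescribed $L^2$-neighborhood of $\ub^\infty$ for all $t\geq 0$.

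The heart of the proof is then the promised \emph{multicomponent Poincar\'e--Beckner inequality}, which should schematically read
\[
\|\ub-\ub^\infty\|_{L^2(\Omega)}^2 \leq C \sum_{i=1}^N\int_\Omega u_i\bigl(|\nabla f_i|^2+|f_i|^2\bigr),
\]
valid under \eqref{eq:am4} provided $\|\ub-\ub^\infty\|_{L^2}$ is small enough (i.e.\ on the subcritical set). Combined with the comparability $\E(\ub)\sim\|\ub-\ub^\infty\|_{L^2}^2$ and with $\fb=-A(\ub-\ub^\infty)$, this immediately yields the desired EEP estimate with an explicit $\gamma$ depending on $A,\mb,\Omega$ and on the smallness threshold (and hence ultimately on $\ub^0$ through $\E(\ub^0)$).

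The main obstacle is establishing this weighted inequality with a possibly degenerate weight $u_i$. The natural approach is to split $u_i=u_i^\infty+(u_i-u_i^\infty)$ and use the uniform positivity $u_i^\infty\geq c_\infty$ to bound $D(\ub)$ from below by the nondegenerate constant-weight term $c_\infty\sum_i\|f_i\|_{H^1}^2$, which via the scalar Poincar\'e inequality controls $\|\fb\|_{L^2}^2\sim\|\ub-\ub^\infty\|_{L^2}^2$. The perturbative error $\sum_i\int|u_i-u_i^\infty|\,|f_i|^2$ then has to be absorbed, which requires controlling $\|f_i\|_{L^4}^2$ by Sobolev embedding and exploiting the smallness of $\|\ub-\ub^\infty\|_{L^2}$ provided by subcriticality to eat the surviving $\|\nabla f_i\|_{L^2}^2$ part into the good term. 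The truly nontrivial point is that the inequality is \emph{multicomponent}: the coupling through $A$ mixes the components in both $\E$ and $\fb$, so the scalar argument of \cite{KMV15} must be upgraded to a vectorial Poincar\'e--Beckner statement with careful bookkeeping of constants, and this is where I expect most of the analytic work to concentrate.
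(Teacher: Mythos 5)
Your overall architecture (an entropy--entropy production inequality fed into the EDI \eqref{eq:EDI}, then Gr\"onwall) is the same as the paper's, and your reduction to the weighted inequality $\|\fb\|_{L^2}^2\le C\sum_i\int_\Omega u_i(|f_i|^2+|\nabla f_i|^2)$ is exactly the right target. The genuine gap is in your proposed proof of that inequality. Splitting $u_i=u_i^\infty+(u_i-u_i^\infty)$ and using $u_i^\infty\ge c_\infty$ leaves you with the error term $\sum_i\int_\Omega|u_i-u_i^\infty|\,\bigl(|f_i|^2+|\nabla f_i|^2\bigr)$. The $|f_i|^2$ part can be handled as you suggest, via $\int_\Omega|u_i-u_i^\infty|\,|f_i|^2\le\|u_i-u_i^\infty\|_{L^2}\|f_i\|_{L^4}^2\lesssim\|u_i-u_i^\infty\|_{L^2}\|f_i\|_{H^1}^2$, but this --- and a fortiori the $|\nabla f_i|^2$ part, for which H\"older would require $\nabla f_i\in L^4$ --- brings in the \emph{unweighted} quantity $\|\nabla f_i\|_{L^2}^2$. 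That quantity is controlled by nothing on the right-hand side: the dissipation only contains the degenerate term $\int_\Omega u_i|\nabla f_i|^2$, which is weak precisely on the set where $u_i$ is small, and $\fb=-A(\ub-\ub^\infty)$ with $\ub$ merely in $H^1$ provides no extra integrability of $\nabla f_i$. So there is no good term into which to absorb $\|u_i-u_i^\infty\|_{L^2}\|\nabla f_i\|_{L^2}^2$, and the absorption scheme does not close. This is exactly the difficulty that makes the weighted inequality nontrivial, and smallness of $\|\ub-\ub^\infty\|_{L^2}$ alone does not resolve it by elementary means.

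The paper circumvents this not by a perturbative argument but by a compactness statement (Theorem~\ref{th:kmpv2}, proved in \cite{KMV16-2}): the inequality \eqref{eq:kmpv1} holds uniformly on any set of non-negative $H^1$ fields that is separated in $L^q$, $q\in[1,2)$, from the finite family of \emph{partial extinction states} $\ub^I$ (defined by $u_i=0$ for $i\in I$, $f_j=0$ for $j\notin I$). Correspondingly, $E^*$ in \eqref{crent} is not a smallness parameter but the minimal entropy over those states, and the invariant sublevel set $\{\E\le\E(\ub^0)\}$ avoids them by weak $L^2$ lower semicontinuity of $\E$. This also shows that your claim that \eqref{eq:am4} is needed only to get $\ub^\infty>0$ undersells the hypothesis: it is what guarantees that \emph{all} the partial extinction states exist with surviving components uniformly bounded away from zero, which is what the constant in \eqref{eq:kmpv1} genuinely depends on. A small-$E^*$ version of the theorem would still establish the literal statement if your key inequality were proved, but as written that proof has a real hole; the fix is to invoke the compactness-based Theorem~\ref{th:kmpv2} on the invariant sublevel set rather than attempting a direct absorption.
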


Note that our coercivity assumption $A\geq \lambda_A$ controls $\E(\ub)\geq \frac{\lambda_A}{2}\|\ub-\ub^\infty\|^2_{L^2(\Omega)}$, thus the entropy decay \eqref{eq:din} immediately implies $L^2$ convergence.
The consequences and interpretation of the additional assumption \eqref{eq:am4} will be discussed later on in Section~\ref{section:long_time_convergence}.
In Theorem~\ref{T:lt1} we had to restrict to subcritical entropies $\E(\ub^0)<E^*$ for technical reasons, but we conjecture that \eqref{eq:din} holds for any $\ub^0\geq 0$ (unless some component of $\ub^0$ is identically zero).
Indeed our proof of the long-time convergence works provided that no extinction occurs, say $\|u_i(t)\|_{L^p(\Omega)}\geq c_i>0$ for all $i\in \{1,\ldots,N\}$, $t\geq 0$, and some $p\geq 1$.
For both the ODE dynamics (i-e when all densities and resources are constant in space) and for the one-animal PDE dynamics \cite{CW13,KMV15} this is true, but due to the cross-diffusion we were not able to prove the non-extinction in the general case.
In other words, our subcriticality assumption in Theorem~\ref{T:lt1} is a technical workaround guaranteeing that our solution stays away from a finite number of certain partial extinction regimes.
These regimes correspond to the situations when some (or all) populations go extinct, and the survivors compose a (lower-dimensional) ideal free distribution.
This allows us to provide an explicit value for $E^*$ depending only on the structure of the problem, see Section~\ref{section:long_time_convergence} for the details.

\section{The gradient-flow structure and a priori estimates}
\label{section:gradient_flow}
The celebrated Benamou-Brenier formula was originally established in \cite{BenamouBrenier00} to characterize the quadratic Monge-Kantorovich-Wasserstein distance as a dynamical evolution problem, and is restricted to \emph{conservative} optimal transport of measures with fixed mass (typically probability measures). 
In \cite{KMV15} we constructed an optimal transport distance $\mathtt d$ on the space of \emph{arbitrary} non-negative Radon measures $\Mm(\Omega)$, based on a modified dynamical Benamou-Brenier formula allowing for mass variations.
More precisely, for any $u_0,u_1\in \Mm(\Omega)$ the distance reads
$$
\d^2(u_0,u_1)=\min\limits_{(u,g)}\int_0^1\int_\Omega(|\nabla g_t(x)|^2+|g_t(x)|^2)\rd u_t(x)\rd t,
$$
where the infimum is taken among narrowly continuous curves $$t\mapsto u_t\in \mathcal C_w([0,1];\Mm(\Omega))$$ with endpoints $u_0,u_1$ such that
the non conservative continuity equation
$$
\partial_t u_t+\dive(u_t\nabla g_t)=u_t g_t
$$
holds in the sense of distributions $\mathcal D'((0,1)\times\Omega)$.
Biologically, one can think of $g$ as fitness: in the continuity equation above the individuals $u$ reproduce or die with rate $g$ equal to the local fitness, and move along the velocity field $\nabla g$ towards the most favorable environment.
Our construction was originally derived in the whole space $\Omega=\R^d$, but immediately extends to general domains imposing natural zero-flux boundary conditions on the velocity fields $\nabla g_t$ on $\partial\Omega$.
In addition to nice geometrical and topological properties (completeness, existence of geodesics, lower semi-continuity with respect to $\text{weak-}*$ convergence, characterization of Lipschitz curves...) the metric $\mathtt d$ gives a formal Riemannian structure \emph{\`a la Otto} \cite{otto01} on the space $\Mm(\Omega)$, endowing the tangent plane
$$
T_u\Mm=\{\partial_t u=\zeta:\qquad \zeta=-\dive(u\nabla g)+u g,\quad g\in H^1(\rd u)\}
$$
with the norm
\begin{equation}
\label{eq:def_tangent_norm}
\|\zeta\|^2_{T_u\Mm}=\|g\|^2_{H^1(\rd u)}=\int_\Omega(|\nabla g|^2+|g|^2)\rd u
\end{equation}
and scalar product
$$
\left<\zeta_1,\zeta_2\right>_{T_u\Mm}:=\left(g_1,g_2\right)_{H^1(\rd u)}=\int_\Omega(\nabla g_1\cdot\nabla g_2 + g_1 g_2)\rd u.
$$
Here tangent vectors $\partial_t u=\zeta\in T_u\Mm$ are always identified with scalar potentials $g\in H^1(\rd u)$ through the elliptic equation $-\dive(u\nabla g)+ug=\zeta$ (supplemented with zero-flux boundary conditions on $\partial\Omega$ if needed).
In particular this allows to compute metric gradients for functionals $\mathcal F(u)=\int_\Omega F(x,u)$ on $\Mm$ as
\begin{equation}
\label{eq:formula_grad_d_scalar}
\grad_\d \mathcal F(u)=-\dive\left(u\nabla \frac{\delta F}{\delta u}\right)+u\frac{\delta F}{\delta u},
\end{equation}
where $\frac{\delta F}{\delta u}=\partial_uF(x,u)$ stands for the first variation with respect to $u$ and $\nabla=\nabla_x$ is the usual gradient in space.
We refer to \cite{KMV15} for further details and explanations.

Since we want to deal here with multicomponent variables $\ub=(u_1,\ldots,u_N)$ we endow $(\Mm(\Omega))^N$ with the natural product distance
$$
\D^2(\ub,\mathbf{v})=\sum\limits_{i=1}^N\d^2(u_i,v_i),
$$
giving the natural Riemannian metrics
$$
\left<\boldsymbol{\zeta}^1,\boldsymbol\zeta^2\right>_{T_\ub(\Mm)^N}=\left(\mathbf g^1,\mathbf g ^2\right)_{H^1(\rd\ub)}=\sum\limits_{i=1}^N\int_\Omega (\nabla g_i^1\cdot \nabla g_i^2+g_i^1g_i^2)\rd u_i
$$
in the tangent space $T_\ub(\Mm)^N=\overset{N}{\underset{i=1}{\oplus}} T_{u_i}\Mm$.
Here we identify again the tangent vectors $\boldsymbol \zeta=(\zeta_1,\ldots,\zeta_N)$ with potentials $\boldsymbol g=(g_1,\ldots g_N)$ via the elliptic equations $-\dive(u_i\nabla g_i)+u_ig_i=\zeta_i$ (with homogeneous Neumann boundary conditions).
The metric derivatives with respect to $\D$ can be simply computed applying \eqref{eq:formula_grad_d_scalar} component by component, i-e gradients of functionals $\mathcal F(\ub)=\int_\Omega F(x,u_1,\ldots,u_N)$ read
$$
\grad_\D\mathcal F(\ub)=\left(-\dive\left(u_i\nabla\frac {\delta F}{\delta u_i}\right)+u_i\frac {\delta F}{\delta u_i}\right)_{i=1\ldots N}.
$$
For the particular case $\E(\ub)=\frac{1}{2}\int_\Omega A(\ub-\ub^{\infty})\cdot (\ub-\ub^{\infty})$ and with the previous notation $\fb=\mb-A\ub$, we have $\frac{\delta F}{\delta u_i}=(A(\ub-\ub^\infty))_i=(A\ub-\mb)_i=-f_i$, thus our system of PDEs can indeed be written as the gradient flow
\begin{equation}
\label{eq:structure_PDE_gradient_flow}
\frac{d\ub}{dt}=-\grad_{\D} \E(\ub)
\quad \Leftrightarrow \quad
\partial_t u_i=-\dive(u_i\nabla f_i)+u_if_i.
\end{equation}
\\

At this stage let us derive two formal estimates, which will be crucial for the subsequent analysis.
Here we ignore all the regularity issues, and we shall make these estimates rigorous throughout the several regularized problems involved in the proof of existence.
The first estimate is the Entropy-Dissipation-Inequality \eqref{eq:EDI}, and is inherent to the gradient-flow structure.
Indeed from \eqref{eq:structure_PDE_gradient_flow} we should have along reasonably smooth solutions that
$$
 \frac{d \E}{dt}=\left<\grad_\D \E (\ub),\frac{d\ub}{dt}\right>_{T_\ub(\Mm)^N}=-\|\grad_\D \E (\ub)\|^2_{T_\ub(\Mm)^N}.
$$
Given the above definition of the tangent norms and the explicit computation of the metric gradient in terms of the fitness, this reads in our setting
$$
\E(\ub(t_1))+\sum\limits_i\int_{t_0}^{t_1}\int_{\Omega} u_i(|\nabla f_i|^2+|f_i|^2) = \E(\ub(t_0))
\quad
\mbox{for all }0\leq t_0\leq t_1.
$$
This is often referred to as the Entropy-Dissipation-\emph{Equality} for the obvious reasons, and implies of course \eqref{eq:EDI}.
However the latter \emph{inequality} is well known to still completely characterize metric gradient flows \cite{AGS06}, and will turn out to be more flexible and easier to obtain rigorously along the various approximations.

The second fundamental estimate, which will serve as a technical tool, is the a priori gradient estimate
$$
\|\nabla \ub\|^2_{L^2(0,T;L^2(\Omega)^{N\times d})}\leq C(1+T)
$$
and can be viewed as a ``flow interchange'' estimate as introduced in \cite{MMS_flow_interchange}.
Indeed the estimate formally follows from computing the dissipation of the Boltzmann entropy $\mathcal H(u_i)=\int_\Omega \{u_i\log u_i-u_i+1\}$ along solutions of our PDE, which is the gradient flow of the driving functional $\E$.
More precisely, testing $\log u_i$ in \eqref{eq:zveri} (recall that our weak solutions will be non-negative) we compute
\begin{multline*}
\frac{d}{dt}\mathcal H(u_i)=\int_\Omega \log u_i\,\partial_t u_i =\int_\Omega \log u_i\big\{-\dive(u_i\nabla f_i+u_i f_i)\big\}\\
= \int \nabla u_i\cdot\nabla f_i + \int_\Omega u_i f_i\log u_i.
\end{multline*}
Writing $f_i=m_i-(A\ub)_i$ in the last gradient term, summing over $i$'s, and integrating in time, this can be rearranged as
\begin{multline*}
\sum\limits_i\left\{ \mathcal H(u_i(T))+\int_0^T\int_\Omega \nabla u_i\cdot \nabla (A\ub)_i\right\}\leq \sum\limits_i\Bigg\{ \mathcal H(u_i(0))\\
+\int_0^T\int_\Omega \nabla u_i\cdot \nabla m_i
+\frac{1}{2}\int_0^T\int_\Omega u_i|f_i|^2
+\frac{1}{2}\int_0^T \int_\Omega u_i|\log u_i|^2\Bigg\}.
\end{multline*}
Observe from $\E(\ub)\geq \frac{\lambda_A}{2} \|\ub-\mb\|_{L^2(\Omega)^N}$ and the previous EDI \eqref{eq:EDI} that $\ub(t)$ is bounded in $L^2(\Omega)^N$, thus the subquadratic terms $\mathcal H(u_i(0))$ and $\int_\Omega u_i|\log u_i|^2$ are controlled uniformly in time in the right-hand side.
Exploiting next $\mathcal H(u_i(T))\geq 0$ and the coercivity $A\geq \lambda_A$, a suitable use of Young's inequality finally gives
$$
\frac{\lambda_A}{2}\|\nabla \ub\|_{L^2(0,T;L^2(\Omega)^N)}^2\leq C(1+T)
+
\frac{2}{\lambda_A}\|\nabla\mb\|^2_{L^2(\Omega)^N}T
+\sum\limits_i\frac{1}{2}\int_0^T\int_\Omega u_i|f_i|^2\leq C(1+T).
$$
Here we used \eqref{eq:EDI} to bound the dissipation term $\int_0^\infty\int_\Omega u_i|f_i|^2\leq \E(\ub^0)\leq C$. We also implicitly assumed that $A$ is a constant matrix, otherwise some extra lower terms appear but the gradient estimate is still true.

\section{Existence of weak solutions}
\label{section:existence}
Our construction of weak solutions will involve three levels of approximation, indexed by the regularization parameters $\eps,\delta\to 0$ and $M\to\infty$.
More precisely, we shall consider the regularized problems
\begin{equation}
\label{eq:zveri_a1}
\left\{
\begin{array}{ll}
\displaystyle
\p_t u_i +\eps  (\mathcal  A \mathbf u)_i= -\operatorname{div} (\tilde u_i \nabla f_i) + \tilde u_i  f_i+\delta \Delta u_i,
\\
u_i(0, x) = u^0_{i}(x)
\end{array}
\qquad \text{with }\fb=\mb - A\ub.
\right.
\end{equation}
Here
$$
\tilde u_i=\max(0,\min(M,u_i))
$$
is the truncation between $0$ and $M\gg1$.
The operator $\mathcal  A$ is a suitable elliptic operator of higher order to be precised shortly together with its associated boundary conditions, and will essentially allow to consider the second-order cross diffusion as a compact perturbation of $\eps\mathcal A$ for fixed $\eps>0$.
Note that in the original PDEs the terms $u_i\nabla f_i$ only belong to $L^1(\Omega)$ if $\ub\in H^1(\Omega)^N$, while the truncations $\tilde u_i\nabla u_i\in L^2(\Omega)$ behave much better for fixed $M<\infty$.
Lastly, the $\delta\Delta$ term will help to gain coercivity and control the degenerate cross-diffusion.

We shall first take $\eps\to 0$, then $M\to\infty$, and finally $\delta\to 0$ in this order.
The most delicate limit will be $M\to\infty$, when we loose the regularity $\tilde u_i\nabla f_i\in L^2(\Omega)$ to the more delicate $u_i\nabla f_i\in L^1(\Omega)$.
This step will also require the rigorous justification of the formal computations and chain rules from Section~\ref{section:gradient_flow}, and will be the most involved.
It is worth pointing out that solutions will become non-negative $u_i(t,x)\geq 0$ only after taking $\eps\to 0$, which will then carry through the next limits $M\to\infty,\delta\to 0$.
For convenience we will work in finite time intervals $[0,T]$.
All our estimates will give local-in time control, and we will retrieve in the end a global solution by standard diagonal extraction.
In order to keep the notation light we omit the $\eps,M,\delta$ indexes as often as possible, and throughout the manuscript we will precise the dependence of the solution on the regularizing parameters when needed.\\

Our first step is to prove existence of solutions to \eqref{eq:zveri_a1} for small $\eps,\delta>0$ and large $M<\infty$ in any fixed time interval $[0,T]$.
In order to give a precise meaning to this problem, consider the Hilbert triple (see the Appendix for the abstract definition)
$$
(H^r(\Omega)^N, L^2(\Omega)^N, {(H^r)}^*(\Omega)^N)
\qquad 
\text{for fixed }r>1+\frac d 2.
$$
Denote by $\mathcal  A$ the Riesz isometry between the spaces $(H^r)^N$ and ${((H^r)}^*)^N$.
We recall the Sobolev embedding $H^r(\Omega)\subset \mathcal C^1(\overline\Omega)$, which is compact.  

The weak form of \eqref{eq:zveri_a1} (with a certain implicit  higher-order Neumann boundary condition which is of no importance to us) is the following Cauchy problem
\begin{equation}
\label{E:apprp1}
\ub' +\eps  \mathcal A\mathbf u=Q(\mathbf u), \qquad \mathbf u|_{t=0}=\mathbf u^0,
\end{equation}  
 where the first equality is understood as an ODE in the space $((H^r)^*)^N$ and the initial datum should be taken in the sense of $\mathcal C([0,T];(L^2)^N$).
The operator $Q:(H^1)^N\to ((H^1)^*)^N$ is determined by duality as
$$
\langle Q(\mathbf u), \mathbf w\rangle= \sum_{i=1}^N  \int_\Omega \left(\tilde u_i \nabla f_i\cdot \nabla w_i + \tilde u_i  f_i w_i-\delta \nabla u_i \cdot \nabla w_i\right), \quad \forall \mathbf w \in (H^1)^N.
$$
Note that, for fixed $M>0$, the operator $Q$ has sublinear growth and is continuous from $\mathcal C^1(\overline\Omega)^N$ to $((H^r)^*)^N$.  By Lemma \ref{l:ap} with $X=H^r(\Omega)^N$, $V=\mathcal C^1(\overline\Omega)^N$, $Y=L^2(\Omega)^N$, there exists a solution $\mathbf u$ to \eqref{E:apprp1} in the class
\begin{equation}
\label{e:skl}
L^{2}(0,T;(H^r)^N)\cap H^{1}(0,T;((H^r)^*)^N)\cap \mathcal C([0,T];(L^2)^N) 
\end{equation}
for any fixed $T>0$.
%
\subsection{The limit $\eps \to 0$}
In this section $M<\infty$ and $\delta>0$ are fixed and only $\eps$ varies.
In order to send $\eps\to 0$ we first rigorously derive an entropy estimate.
To this end, we test \eqref{E:apprp1} with $-\mathbf f$ (this test function is legitimate since $\mathbf f(t,x)=\mb(x)-A\ub(t,x)$ belongs to the intersection \eqref{e:skl}), and arrive at 
\begin{multline}
\label{eq:dissipation_eps}
\frac d {dt} \E(\ub)+\eps  (\ub, A\ub -\mb)_{(H^r)^N}
+\delta(\nabla \ub, \nabla A\ub -\nabla \mb)_{(L^2)^{N\times d}}+ \sum_{i=1}^N  \int_\Omega \tilde u_i (|f_i|^2+|\nabla f_i|^2)=0.
\end{multline}
Integrating in time from $0$ to any $\tau\leq T$, exploiting the coercivity $A\geq \lambda_A\operatorname{Id}$, and applying Cauchy's inequality we find that
\begin{multline}
\label{eq:dissipation_integrated_eps}
\E(\ub(\tau))+\frac 1 2 \eps  \lambda_A\|\ub\|^2_{L^2(0,\tau;(H^r)^N)}+\frac 1 2 \delta \lambda_A\|\nabla \ub\|^2_{L^2(0,\tau;(L^2)^{N\times d})}
+ \sum\limits_i\int_0^\tau \int_\Omega\tilde u_i(|\nabla f_i|^2+|f_i|^2)
\\ \leq \E(\ub^0)+T\frac \eps  {2  \lambda_A}\|\mb\|^2_{(H^r)^N}+T \frac \delta {2  \lambda_A}\|\nabla \mb\|^2_{(L^2)^{N\times d}}.
\end{multline} 
Recalling that $\E(\ub)$ controls the $L^2$ norm, this implies the a priori estimates

\begin{equation}
 \label{l2est}
 \|\ub\|_{L^\infty(0,T;(L^2)^N)}^2\leq C(1+T),
 \end{equation}
\begin{equation}    
\label{delest1}
\|\ub\|^2_{L^2(0,T;(H^1)^N)}\leq C \delta^{-1}(1 +T),
\end{equation}
\begin{equation}
 \label{epsest}
 \|\ub\|^2_{L^2(0,T;(H^r)^N)}\leq C \eps ^{-1} (1 +T)
 \end{equation}
 for small $\eps,\delta$, where the various constants $C$ are independent of $M,\eps,\delta, T$.
Testing \eqref{E:apprp1} by arbitrary $\mathbf w \in (H^r)^N$ and employing \eqref{delest1} and \eqref{epsest}, we deduce 
\begin{equation*}
 \label{tderest1}
 \|\ub^\prime\|_{L^{2}(0,T;((H^r)^*)^N)}\leq C_1,
 \end{equation*}
 where $C_1\equiv C_1(\delta,M,T)$ is independent of $\eps $.
 By the Banach-Alaoglu theorem and Aubin-Lions-Simon lemma, we can find a sequence $\ub^{\eps_k}$ of solutions to \eqref{E:apprp1} with $\eps=\eps_k\to 0$ (for fixed $M,\delta$) such that
 $$
 \ub^{\eps_k}\rightharpoonup \ub\textrm{\ weakly\ in\ }L^2(0,T;(H^1)^N)\textrm{\ and\ weakly-*\ in\ } L^\infty(0,T;(L^2)^N),
 $$
$$
\ub^{\eps_k}\to \ub\textrm{\  strongly\ in\ } L^2(0,T;(L^2)^N)\textrm{\ and\ in\ } \mathcal C([0,T];((H^1)^*)^N),
$$
$$
(\ub^{\eps_k})^\prime\rightharpoonup \ub^\prime\textrm{\ weakly\ in\ }L^{2}(0,T;((H^r)^*)^N).
$$
By the classical continuity property \cite{Kr64} of Nemytskii truncations,
$$
\tilde\ub^{\eps_k}\to \tilde\ub\textrm{\  strongly\ in\ } L^2(0,T;(L^2)^N),
$$
and because $\fb^{\eps_k}=\mathbf m-A\ub^{\eps_k}\to \mathbf m -A\ub=\fb$ strongly in $L^2(0,T;(L^2)^N)$ and weakly in $L^2(0,T;H^1)$ the products
$$
\tilde u_i^{\eps_k}\nabla f_i^{\eps_k}\rightharpoonup \tilde u_i\nabla f_i
\quad\mbox{and}\quad
\tilde u_i^{\eps_k} f^{\eps_k}_i\rightharpoonup \tilde u_i f_i
\qquad
\mbox{weakly in }L^1(0,T;L^1)
$$
converge as strong-weak limits.
Passing to the limit $\eps_k\to 0$ in \eqref{E:apprp1}, we see that $\ub$ solves the problem 
\begin{equation}
\label{E:apprp2} 
\mathbf u^\prime =Q(\mathbf u), \quad \mathbf u|_{t=0}=\mathbf u^0, 
\end{equation}
which is nothing but the weak form of the problem 
\begin{equation}
\label{eq:zveri_apr}
\left\{
\begin{array}{ll}
\displaystyle
\p_t u_i = -\operatorname{div} (\tilde u_i \nabla f_i) + \tilde u_i  f_i+\delta \Delta u_i,&
x \in \Omega,
\\
\tilde u_i
\frac{
\partial f_i
}{
\partial \nu
}-\delta \frac{
\partial u_i
}{
\partial \nu
}
=
0
,
&
x \in \partial \Omega,
\\
u_i(0, x) = u_{0i}(x).
\end{array}
\right.
\end{equation}
By density it is easy to check that $\mathbf u^\prime\in L^{2}(0,T;((H^1)^*)^N)$, and the first equality in \eqref{E:apprp2} holds in the space $(H^1)^*)^N$ for a.e. $t$.
By standard Lions-Magenes interpolation results \cite[Lemma 2.2.7]{ZV08} we have moreover $\mathbf u\in \mathcal C([0,T];(L^2)^N)$. \\

We now show non-negativity of our weak solution $\ub=\lim \ub^{\eps_k}$ to \eqref{eq:zveri_apr} and derive a priori $L^2(0,T;(H^1)^N)$ estimates uniformly in $M,\delta$, which will allow to take the limit $M\to\infty,\delta\to 0$ in the next sections.
After the previous limit $\eps\to 0$ the solution $\ub$ belongs at this stage to $\mathcal C([0,T];(L^2)^N)\cap L^2(0,T;(H^1)^N)$ for fixed $M,\delta>0$.
Therefore we can take again $-\mathbf f$ as a test function and repeat the previous computations \eqref{eq:dissipation_eps}\eqref{eq:dissipation_integrated_eps} with now $\eps=0$, and we get similarly
\begin{equation}
\label{eq:estimate_dissipation_leq_CT}
\E(\ub(t_1))+ \sum\limits_i\int_{t_0}^{t_1}\int_\Omega\tilde u_i(|\nabla f_i|^2+|f_i|^2)
 \leq \E(\ub(t_0))+T \frac \delta {2  \lambda_A}\|\nabla \mb\|^2_{(L^2)^{N\times d}}
\end{equation}
for all $0\leq t_0\leq t_1\leq T$.
This is of course an approximation of the Entropy Dissipation Inequality \eqref{eq:EDI}, which will pass to the successive limits $M\to\infty$ and $\delta\to 0$ later on.

In order show that $u_i\geq 0$ we take the admissible test function $v_i:=\min\{u_i,0\}\in L^2(0,T;H^1)$ in \eqref{eq:zveri_apr}.
By the classical Serrin's chain rule $\nabla v_i=\chi_{[u_i<0]}\nabla u_i$ we get for all components $i=1\ldots N$
$$
 \frac{d}{dt}\left(\frac 12 \int_\Omega |v_i|^2\right)
 = \int_{\Omega}\tilde u_i\nabla f_i\cdot\nabla v_i+\int_\Omega\tilde u_i f_i v_i-\delta\int_\Omega |\nabla v_i|^2\leq 0,
$$
where we used that by definition the truncation $\tilde u_i=\max\{0,\min\{u_i,M\}\}=0$ wherever $v_i=\min\{u_i,0\}\neq 0$ and $\nabla v_i=\nabla u_i \chi_{[u_i<0]}\neq 0$ so that the first two integrands in the middle term are identically zero.
Since we consider non-negative initial data $u_i^0\geq 0$ we have $v_i(0,.)=0$, thus $v_i(t,.)=0$ for all later times and
$$
v_i=\min\{0,u_i\}\equiv 0
\qquad \Rightarrow \qquad u_i(t,x)\geq 0\mbox{ a.e. in }(0,T)\times\Omega.
$$
From now on we slightly abuse the notation and still write $\tilde{u}_i=\min\{u_i,M\}\geq 0$ for the upper truncation only, which is justified since we just proved that $u_i\geq 0$.

In order to mimic the formal gradient estimate from Section~\ref{section:gradient_flow}, we would like to test $\log u_i$ in \eqref{eq:zveri_apr}.
However this is not rigorous because $\log u_i$  may not be an admissible test function.
We use instead the truncated logarithm and Boltzmann entropy, defined as
$$
\log^M_\zeta(z):=\left\{
\begin{array}{ll}
 \log \zeta & \mbox{if }0<z\leq \zeta\\
 \log z & \mbox{if }\zeta \leq z <M\\
 \log M & \mbox{else}
\end{array}
\right.
\quad\mbox{and}\quad
H_\zeta^M(z):=\int_1^z\log_\zeta^M(s)\rd s
$$
for small $\zeta>0$ ($M>0$ is the same truncation level as before).
Note that
$$
0\leq H^M_\zeta(z)\leq H(z):=z\log z-z+1
$$
with monotone pointwise convergence $H^M_\zeta(.)\nearrow H(.)$ as $\zeta\searrow 0$ and $M\nearrow \infty$ (the convergence is actually locally uniform).
We stress at this point that all the next estimates will be uniform in $\delta,M$, and all the constants $C_T$ below will depend on the data and $T>0$ only (if $\delta,\zeta>0$ are small and $M>0$ is large).

For fixed $0<\zeta<M<\infty$ the functions $\log_\zeta^M,H^M_\zeta$ are globally Lipschitz, thus $\log_\zeta^M(u_i)\in L^2(0,T; H^1)$ is an admissible test function in \eqref{eq:zveri_apr} and, using the chain rule $\nabla \log_\zeta^M(u_i)=\frac{\nabla u_i}{u_i}\chi_{[\zeta<u_i<M]}$,
\begin{multline*}
\frac{d}{dt}\left(\int_\Omega H_\zeta^M(u_i) \right) 
=\left<\partial_t u_i,\log_\zeta^M(u_i)\right>_{(H^1)^*,H^1}\\
=\int\limits_{[\zeta<u_i<M]} \nabla f_i\cdot \nabla u_i+\int_\Omega \tilde u_i f_i\log_\zeta^M(u_i)-\delta\int\limits_{[\zeta<u_i<M]}\frac{|\nabla u_i|^2}{|u_i|}.
\end{multline*}
Integrating from $t=0$ to $t=T$, exploiting the monotonicity $0\leq H_\zeta^M\leq H$, applying Young's inequality to $(f_i,\log^M_\zeta u_i)_{L^2(\rd\tilde u_i)}$, and discarding the last non-positive term, we get
\begin{multline*}
0\leq \int_\Omega H^M_\zeta(u_i(T))
\leq  \int_\Omega H^M_\zeta(u_i(0)) + \iint\limits_{Q_T\cap [\zeta<u_i<M]}\nabla f_i\cdot \nabla u_i +
\iint\limits_{Q_T}\tilde{u}_i f_i\log_\zeta^M(u_i)\\
\leq \int_\Omega H(u_i(0))+ \iint\limits_{Q_T\cap [\zeta<u_i<M]}\nabla f_i\cdot \nabla u_i + \frac{1}{2}\left(\iint\limits_{Q_T}\tilde u_i |f_i|^2
+\iint\limits_{Q_T}\tilde u_i\left|\log_\zeta ^M(u_i)\right|^2\right)\\
\leq \mathcal H(u_i(0))+ \iint\limits_{Q_T\cap [\zeta<u_i<M]}\nabla f_i\cdot \nabla u_i + \frac{1}{2}\iint\limits_{Q_T}\tilde u_i |f_i|^2+\frac{1}{2}\iint\limits_{Q_T}u_i\left|\log(u_i)\right|^2.
\end{multline*}
In the last line we also used $|\log_\zeta^M(z)|\leq |\log z|$ for small $\zeta$ and large $M$.
Controlling the subquadratic terms $\mathcal H(u_i(0)),\iint u_i|\log u_i|^2$ by $u_i(0)\in L^2(\Omega)$ and $\|u_i\|_{L^\infty(0,T;L^2)}\leq C_T$, and exploiting the dissipation estimate \eqref{eq:estimate_dissipation_leq_CT} we get
$$
0\leq C_T+\iint\limits_{Q_T\cap [\zeta<u_i<M]}\nabla f_i\cdot \nabla u_i
$$
for all $i=1\ldots N$.
This immediately passes to the limit $\zeta\searrow 0$, and recalling that by definition $f_i=m_i-(A\ub)_i$ we rewrite this limit as
$$
\iint\limits_{Q_T} \nabla (A\ub)_i\cdot\nabla u_i\leq C_T+\iint\limits_{Q_T}\nabla m_i\cdot \nabla u_i-\iint\limits_{[u_i\geq M]}\nabla f_i\cdot \nabla u_i.
$$
Summing over $i$'s, taking advantage of the coercivity $A\geq \lambda_A$, and suitably applying Young's inequality to the last two terms, this easily gives
$$
\iint\limits_{Q_T}|\nabla \ub|^2\leq C_T + C_A\sum\limits_i\iint\limits_{[u_i\geq M]}|\nabla f_i|^2
$$
and it is enough to show that the last term can be bounded uniformly in $M,\delta$.
To this end, observe by definition of $\tilde{u}_i=\min\{u_i,M\}$ that the dissipation estimate \eqref{eq:estimate_dissipation_leq_CT} immediately yields
$$
\iint\limits_{[u_i\geq M]}|\nabla f_i|^2 
= \iint\limits_{[u_i\geq M]}\frac{\tilde u_i}{M} |\nabla f_i|^2
\leq \frac{1}{M}\iint\limits_{Q_T}\tilde u_i|\nabla f_i|^2
\leq M^{-1}C_T\leq C_T
$$
if $M\geq 1$.
This finally gives the desired gradient estimate
\begin{equation*}
\label{eq:estimate_gradient_u}
\iint_{Q_T}|\nabla \ub|^2\leq C_T
\end{equation*}
uniformly in $M,\delta>0$.\\

In order to estimate the time derivative, let us recall that the weak formulation of \eqref{eq:zveri_apr} holds in duality with all $\mathcal C^1(\overline\Omega)$ (actually, even $H^1$) test functions.
Since $\|\ub\|_{L^{\infty}(0,T;(L^2)^N)}\leq C_T$ and we just proved that $\|\nabla \ub\|_{L^2(0,T;(L^2)^{N\times d})}\leq C_T$ we see that the products $\tilde u_i\nabla f_i$ and $\tilde u_i f_i$ are bounded respectively in $L^2(0,T;L^1)$ and $L^\infty(0,T;L^1)$ uniformly in $\delta,M$.
We conclude by duality in \eqref{eq:zveri_apr} that $\partial_t u_i=-\dive(\tilde u_i\nabla f_i)+\tilde u_i f_i+\delta\Delta u_i$ is bounded in $L^2(0,T;(\mathcal C^1(\overline\Omega)^*)^N)$.
To summarize, the solution $\ub=\lim \ub^{\eps_k}$ of \eqref{eq:zveri_apr} has the regularity
\begin{equation}
\label{eq:regularity_u_delta_M}
\|\ub'\|_{L^2(0,T;(\mathcal C^1(\overline\Omega)^*)^N)}+\|\ub\|_{L^\infty(0,T;(L^2)^N)}+\|\ub\|_{L^2(0,T;(H^1)^N)}\leq C_T
\end{equation}
uniformly in $M,\delta$.

\subsection{The limit $M\to\infty$}
 Here we want to pass to the limit $M\to +\infty$ in \eqref{E:apprp2} and \eqref{eq:zveri_apr} for fixed $\delta>0$.
 
 Define the limit operator $Q_\infty:(H^1)^N\to ((\mathcal{C}^1(\overline\Omega))^*)^N$ by
 $$
 \langle Q_\infty(\mathbf u), \mathbf w\rangle= \sum_{i=1}^N  \int_\Omega \left(u_i \nabla f_i\cdot \nabla w_i + u_i f_i w_i-\delta \nabla u_i \cdot \nabla w_i\right), \quad \forall \mathbf w \in \mathcal C^1(\overline\Omega)^N.
 $$
By \eqref{eq:regularity_u_delta_M}, the Banach-Alaoglu theorem and Aubin-Lions-Simon lemma, there exists a sequence $M_k \to +\infty$ ($\delta>0$ is fixed) such that, for the corresponding solutions $\ub^{M_k}$, we have
 $$
 \ub^{M_k}\rightharpoonup \ub\textrm{\ weakly\ in\ }L^2(0,T;(H^1)^N)\textrm{\ and\ weakly-*\ in\ } L^\infty(0,T;(L^2)^N),
 $$
$$
\ub^{M_k}\to \ub\textrm{\  strongly\ in\ } L^2(0,T;(L^2)^N)\textrm{\ and\ in\ } \mathcal C([0,T];((H^1)^*)^N),
$$
$$
(\ub^{M_k})^\prime\rightharpoonup \ub^\prime\textrm{\ weakly-*\ in\ }L^2(0,T;(\mathcal C^1(\overline\Omega)^*)^N)$$
with
\begin{equation}
\label{eq:estimate_Linfty-L2_L2-H1}
\|\ub '\|_{L^{2}(0,T;(\mathcal C^1(\overline\Omega)^*)^N)}	+	\|\ub\|_{L^\infty(0,T;(L^2)^N)}+\|\ub\|_{L^2(0,T;(H^1)^N)}\leq C_T 
\end{equation}
(uniformly in $\delta$).
Because the truncation operator $z\mapsto \tilde z=\min\{z,M\}$ is $1$-Lipschitz uniformly in $M$ and $u_i^{M_k}\to u_i$ strongly in $L^2$, one readily checks that
$$
\tilde u_i^{M_k}=\min\left\{u_i^{M_k},M_k\right\}\to u_i\qquad \mbox{in }L^2(0,T;L^2).
$$
Therefore the products pass to the strong-weak limit as before, $\tilde u^{M_k}_i\nabla f^{M_k}_i\rightharpoonup u_i\nabla f_i$ and $\tilde u^{M_k}_i f^{M_k}_i\rightharpoonup u_i f_i$ in $L^1(0,T;L^1)$, and $\ub$ solves the weak formulation
\begin{equation*}
\label{E:apprp3}
\mathbf u^\prime =Q_\infty(\mathbf u), \quad \mathbf u|_{t=0}=\mathbf u_0
\end{equation*}
of
\begin{equation}
\label{eq:zveri_apr3}
\left\{
\begin{array}{ll}
\displaystyle
\p_t u_i = -\operatorname{div} (u_i \nabla f_i) + u_i f_i+\delta \Delta u_i,&
x \in \Omega,
\\
u_i
\frac{
\partial f_i
}{
\partial \nu
}-\delta \frac{
\partial u_i
}{
\partial \nu
}
=
0
,
&
x \in \partial \Omega,
\\
u_i(0, x) = u_{0i}(x).
\end{array}
\right.
\end{equation}
In other words, $\ub$ solves
\begin{equation*}
 \label{eq:wsoldel}
\frac d {dt} \int_\Omega  \ub \cdot \mathbf w= \sum_{i=1}^N  \int_\Omega \left( u_i \nabla f_i\cdot \nabla w_i +  u_i  f_i w_i-\delta \nabla u_i \cdot \nabla w_i\right), \quad \forall \mathbf w \in (\mathcal{C}^1(\o\Omega))^N,
\end{equation*}
in the sense of scalar distributions.
Moreover, by the Lions-Magenes lemma \cite[Lemma 2.2.6]{ZV08} we see that $\ub\in \mathcal{C}_w([0,T]; (L^2)^N)$, hence the initial condition is taken in $\mathcal{C}_w([0,T]; (L^2)^N)$.
 
Before moving to the next limit $\delta\to 0$, we need to show that the dissipation estimate \eqref{eq:estimate_dissipation_leq_CT} also passes to the limit $M_k\to\infty$ .
This is not straightforward because of the cubic products $\tilde u_i(|\nabla f_i|^2+|f_i|^2)$, as $\tilde u_i^{M_k}$ does not a priori converge uniformly and $f_i^{M_k}$ should not converge strongly in $L^2(0,T;H^1)$.
In order to circumvent this technical difficulty we use a variant of the Banach-Alaoglu theorem in varying $L^2(\rd\mu^k)$ spaces:
\begin{lem}[compactness of vector-fields]
\label{lem:variant_banach_alaoglu_vector_fields}
Let $\mathcal O\subset \R^p$ be an open set, $\{\mu^k\}_{k\geq 0}$ a sequence of finite non-negative Radon measures narrowly converging to $\mu$, and $\mathbf v^k$ a sequence of vector fields on $\mathcal O$.
If
$$
\|\mathbf v^k\|_{L^2(\mathcal O,\rd\mu^k)}\leq C
$$
then there exists $\mathbf v\in L^2(\mathcal O,\rd\mu)$ such that, up to extraction of some subsequence,
$$
\forall \,\boldsymbol\zeta\in\mathcal C^\infty_c(\mathcal O):\qquad \lim\limits_{k\to\infty}\int_{\mathcal O}\mathbf v^k\cdot \boldsymbol\zeta\, \rd \mu^k=\int_{\mathcal O}\mathbf v\cdot \boldsymbol\zeta\, \rd \mu
$$
and
$$
\|\mathbf v\|_{L^2(\mathcal O,\rd\mu)}\leq \liminf\limits_{k\to\infty}\|\mathbf v^k\|_{L^2(\mathcal O,\rd\mu^k)}.
$$
\end{lem}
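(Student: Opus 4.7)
The plan is to view each $\boldsymbol\nu^k := \mathbf v^k\,\mu^k$ as a finite $\R^p$-valued Radon measure on $\mathcal O$, extract a weak-$*$ limit $\boldsymbol\nu$ via Banach--Alaoglu, and then identify $\boldsymbol\nu$ as a density $\mathbf v\,\rd\mu$ with $\mathbf v\in L^2(\mathcal O,\rd\mu;\R^p)$. The lower semicontinuous norm bound will fall out of this identification via the Riesz representation theorem.

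First I would bound the total variations. By Cauchy--Schwarz,
\[
|\boldsymbol\nu^k|(\mathcal O)\;=\;\int_{\mathcal O}|\mathbf v^k|\,\rd\mu^k\;\leq\;\|\mathbf v^k\|_{L^2(\mathcal O,\rd\mu^k)}\,\mu^k(\mathcal O)^{1/2}\;\leq\; C\,\sup_k\mu^k(\mathcal O)^{1/2},
\]
which is finite since narrow convergence $\mu^k\to\mu$ (tested against the constant function $1\in \mathcal C_b(\mathcal O)$) forces $\sup_k\mu^k(\mathcal O)<\infty$. Weak-$*$ compactness in the dual $(\mathcal C_c(\mathcal O;\R^p))^*$ then gives a subsequence (not relabeled) along which $\boldsymbol\nu^k\rightharpoonup\boldsymbol\nu$ for some finite vector Radon measure $\boldsymbol\nu$; in particular, for every $\boldsymbol\zeta\in\mathcal C_c^\infty(\mathcal O;\R^p)$,
\[
\int_{\mathcal O}\mathbf v^k\cdot\boldsymbol\zeta\,\rd\mu^k\;\longrightarrow\;\int_{\mathcal O}\boldsymbol\zeta\cdot\rd\boldsymbol\nu.
\]

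For the identification, Cauchy--Schwarz together with narrow convergence applied to $|\boldsymbol\zeta|^2\in\mathcal C_c(\mathcal O)\subset\mathcal C_b(\mathcal O)$ yields
\[
\left|\int_{\mathcal O}\boldsymbol\zeta\cdot\rd\boldsymbol\nu\right|=\lim_{k\to\infty}\left|\int_{\mathcal O}\boldsymbol\zeta\cdot\mathbf v^k\,\rd\mu^k\right|\leq\left(\liminf_{k\to\infty}\|\mathbf v^k\|_{L^2(\mathcal O,\rd\mu^k)}\right)\|\boldsymbol\zeta\|_{L^2(\mathcal O,\rd\mu)}.
\]
Hence the linear functional $\boldsymbol\zeta\mapsto\int\boldsymbol\zeta\cdot\rd\boldsymbol\nu$ is continuous on the dense subspace $\mathcal C_c^\infty(\mathcal O;\R^p)\subset L^2(\mathcal O,\rd\mu;\R^p)$ with norm at most $\liminf_k\|\mathbf v^k\|_{L^2(\mathcal O,\rd\mu^k)}$. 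The Riesz representation theorem then produces a unique $\mathbf v\in L^2(\mathcal O,\rd\mu;\R^p)$ with $\rd\boldsymbol\nu=\mathbf v\,\rd\mu$, and both the claimed tested convergence and the $\liminf$ inequality follow immediately.

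The only subtle ingredient is the passage $\int|\boldsymbol\zeta|^2\rd\mu^k\to\int|\boldsymbol\zeta|^2\rd\mu$, but since $|\boldsymbol\zeta|^2$ is bounded continuous with compact support this is immediate from the definition of narrow convergence. The density of $\mathcal C_c^\infty(\mathcal O;\R^p)$ in $L^2(\mathcal O,\rd\mu;\R^p)$, needed to extend the functional, is a classical fact for Radon measures on an open subset of Euclidean space, obtained from Lusin's theorem combined with convolutional regularization.
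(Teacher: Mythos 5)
Your proof is correct. Note that the paper itself does not prove this lemma --- it only cites \cite{AGS06} (optimal transport techniques) and \cite[Proposition 5.3]{KMV15} (a variant of Banach--Alaoglu) --- and your argument is precisely the standard Banach--Alaoglu route the paper alludes to: uniform total-variation bounds on $\boldsymbol\nu^k=\mathbf v^k\mu^k$ via Cauchy--Schwarz, weak-$*$ extraction, and identification of the limit as an $L^2(\rd\mu)$ density by duality. The only cosmetic points are that the weak-$*$ compactness should be phrased in the dual of the completion $\mathcal C_0(\mathcal O;\R^p)$ (whose separability gives \emph{sequential} compactness), and that to write $\rd\boldsymbol\nu=\mathbf v\,\rd\mu$ as an equality of measures one should observe that both sides are finite measures agreeing on the dense class of test functions; neither affects the validity of the argument.
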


The proof of this fact by optimal transport techniques can be found in \cite{AGS06}; this lemma also follows from a variant of the Banach-Alaoglu theorem \cite[Proposition 5.3]{KMV15}.
We will apply this lemma component by component with $\mathcal O=(t_0,t_1)\times\Omega\subset \R^{d+1}$ for fixed $0\leq t_0\leq t_1\leq T$ and the sequence of measures $\rd\mu^k(t,x):=\tilde u_i^{M_k}(t,x)\rd x\rd t$, which converges narrowly to $\rd\mu(t,x)=u_i(t,x)\rd x\rd t$ due to the strong $L^2(Q_T)$ convergence $\tilde u^{M_k}_i\to u_i$.
The vector-fields of interest are of course the $\R^d$-valued $\nabla f_i^{M_k}$ and the scalar $f_i^{M_k}$.
Indeed, for fixed $0\leq t_0\leq t_1\leq T$ we have by \eqref{eq:estimate_dissipation_leq_CT} that
$$
\|\nabla f^{M_k}_i\|^2_{L^2(\mathcal O,\rd\mu^k)}=\int_{t_0}^{t_1}\int_{\Omega} |\nabla f^{M_k}_i|^2\tilde u_i^{M_k}	\leq \E(\ub(0))+C\delta T\leq C_T.
$$
Extracting a subsequence if needed, we see that there is a vector-field $\mathbf v_i\in L^2(\mathcal O,\rd\mu)$ such that
$$
\|\mathbf v_i\|^2_{L^2(\mathcal O,\rd\mu)}\leq \liminf\limits_{k\to\infty}\|\nabla f^{M_k}_i\|^2_{L^2(\mathcal O,\rd\mu^k)},
$$
and we claim that $\mathbf v_i=\nabla f_i$ in $L^2(\mathcal O,\rd\mu)$.
To see this, observe that from the weak convergence in Lemma~\ref{lem:variant_banach_alaoglu_vector_fields} there holds
$$
\lim\limits_{k\to\infty}\int_{t_0}^{t_1}\int_\Omega\nabla f_i^{M_k}\cdot \zeta \tilde u^{M_k}_i 
=\int_{t_0}^{t_1}\int_\Omega\mathbf v\cdot \zeta u_i 
$$
for all $\zeta\in \mathcal C^\infty_c((t_0,t_1)\times\Omega;\R^d)$.
On the other hand we already proved that $\tilde u_i^{M_k}\to u_i$ strongly in $L^2(0,T;L^2)$ and $\nabla f^{M_k}_i\rightharpoonup\nabla f_i$ weakly in $L^2(0,T;L^2)$, thus the product $\tilde u^{M_k}_i \nabla f^{M_k}_i\rightharpoonup u_i\nabla f_i$ weakly in $L^1(0,T;L^1)$.
Therefore
$$
\int_{t_0}^{t_1}\int_\Omega \nabla f_i\cdot \boldsymbol\zeta u_i	=\int_{t_0}^{t_1}\int_\Omega \mathbf v_i\cdot \boldsymbol\zeta u_i
$$
for all $\boldsymbol\zeta\in \mathcal C^\infty_c((t_0,t_1)\times\Omega;\R^d)$.
By density of $\mathcal C^\infty_c$ we conclude that $\mathbf v_i=\nabla f_i$ in $L^2(\mathcal O,\rd\mu)$, which shows in particular that the limit is independent of $t_0,t_1$ and the subsequence.
Whence
$$
\forall \,0\leq t_0\leq t_1:\qquad \int_{t_0}^{t_1}\int_{\Omega}u_i|\nabla f_i|^2
\leq \liminf\limits_{k\to\infty}
\int_{t_0}^{t_1}\int_{\Omega}\tilde u^{M_k}_i|\nabla f^{M_k}_i|^2.
$$
The argument is identical for the terms $\tilde u^{M_k}_i|f^{M_k}_i|^2$.
In order to finally retrieve the Entropy-Dissipation-Inequality, observe that $\ub^{M_k}\to\ub$ in $L^2(0,T;(L^2)^N)$ implies that $\E(\ub^{M_k}(t))\to \E(\ub(t))$ for almost every $t\in (0,T)$.
Taking the $\liminf\limits_{M_k\to\infty}$ in \eqref{eq:estimate_dissipation_leq_CT} we obtain
\begin{equation}
\label{eq:estimate_dissipation_leq_CT_M=infty}
\E(\ub(t_1))+ \sum\limits_i\int_{t_0}^{t_1}\int_\Omega u_i(|\nabla f_i|^2+|f_i|^2)
 \leq \E(\ub(t_0))+T \frac \delta {2  \lambda_A}\|\nabla \mb\|^2_{L^2}
\end{equation}
for almost every $0\leq t_0\leq t_1\leq T$.\\

\subsection{The limit $\delta\to 0$}
 
We are now  ready to pass to the last limit $\delta\to 0$. 
By \eqref{eq:estimate_Linfty-L2_L2-H1} we can find a sequence $\delta_k \to 0$ such that, for the corresponding solutions $\ub^{\delta_k}$ to \eqref{eq:zveri_apr3}, we have
$$
\ub^{\delta_k}\rightharpoonup \ub\textrm{\ weakly\ in\ }L^2(0,T;(H^1)^N)\textrm{\ and\ weakly-*\ in\ } L^\infty(0,T;(L^2)^N),
$$
$$
\ub^{\delta_k}\to \ub\textrm{\  strongly\ in\ } L^2(0,T;(L^2)^N)\textrm{\ and\ in\ } \mathcal C([0,T];((H^1)^*)^N),
$$
$$
(\ub^{\delta_k})^\prime\rightharpoonup \ub^\prime\textrm{\ weakly-*\ in\ }L^{2}(0,T;(\mathcal C^1(\overline\Omega)^*)^N)
$$
for every fixed $T>0$. 
Arguing as before for the products $u^{\delta_k}_i\nabla f^{\delta_k}_i$ and $u_i^{\delta_k} f^{\delta_k}_i$ we can take the strong-weak limits, and the $\delta\Delta u_i$ term in \eqref{eq:zveri_apr3} goes weakly to zero due to the $L^2(0,T;H^1)$ bound. Similarly to the previous step, $\ub\in \mathcal{C}_w([0,T]; (L^2)^N)$, and we can pass to the limit in the initial condition. 
Thus the limit $\ub$ is a weak solution to the original problem \eqref{eq:zveri}.
By standard diagonal extraction arguments it is easy to see that $\ub=\lim \ub^{\delta_k}$ can be chosen independent of the fixed time $T>0$. Thus the above convergence holds locally in time in $[0,\infty)$, the weak solution is global, and has the desired regularity in any finite time interval.

As for the Entropy-Dissipation-Inequality, we can repeat the exact same argument as in the previous section and pass to the $\liminf\limits_{\delta_k\to 0}$ in \eqref{eq:estimate_dissipation_leq_CT_M=infty} (with the last term $T \frac {\delta_k} {2  \lambda_A}\|\nabla \mb\|^2_{(L^2)^{N\times d}}$ vanishing in any finite time interval) to obtain \eqref{eq:EDI}.

\section{Long-time convergence}
\label{section:long_time_convergence}
This section is devoted to the proof of Theorem~\ref{T:lt1}, and without further mention we assume that for any $ I = \{i_1, \dots, i_r\} \subset \{1, \dots, N\}$, $i_1 < \dots 
< i_r$, and $j \notin I$ there holds
\begin{equation}
  \label{eq:am4}
    \begin{vmatrix}
      a_{i_1i_1} & \cdots  & a_{i_1i_r} & m_{i_1} \\
      \vdots   & \ddots & \vdots & \vdots \\
      a_{i_ri_1} & \cdots & a_{i_ri_r} & m_{i_r} \\
      a_{ji_1} & \cdots & a_{ji_r} & m_{j}
    \end{vmatrix}
   (x)\ge \kappa
\end{equation}
for some constant $\kappa\equiv \kappa(A,\mb)>0$.
\begin{rmk}
Letting $I = \emptyset$ in \eqref{eq:am4}, we see that necessarily $m_j(x) \geq \kappa>0$ for any $j$ and $x\in \Omega$, which means that there are only positive resources.
\end{rmk}
With this assumption, some elementary algebra shows in particular that the ideal free distribution
$$
\ub^\infty(x)=A^{-1}\mb(x)\geq c_\kappa>0
$$
becomes now a biologically relevant (non-negative) coexistence steady state of \eqref{eq:zveri} with $\fb\equiv 0$.
This particular distribution is clearly mathematically significant given the definition of the entropy \eqref{e:15}, and we shall prove below that it also attracts the long-time dynamics as in Theorem~\ref{T:lt1}.
However, \eqref{eq:am4} also implies the existence of a finite number of non-negative \emph{partial extinction} steady states constructed as follows:
given any $I \subset \{1, \dots, N\}$ and recalling that by definition $\fb=\mb-A\ub$, it is easy to see from \eqref{eq:am4} that the linear system
 \begin{equation*}
 \label{eq:vi}
 \left\{
 \begin{array}{l}
 u_i = 0 \quad (i \in I)
 \\
 f_j = 0 \quad (j \notin I),
 \end{array}
 \right.
 \end{equation*}
 has a unique solution $\mathbf{u}^I(x)$ satisfying $u_j(x)\geq c_{\kappa}>0$ for all $j\not\in I$ (thus $\ub^I(x)> 0$ componentwise).
  Note that those $\ub^I$ are trivially steady states of \eqref{eq:zveri} with $(u_i)_{i\in I}\equiv 0$ and $(f_j)_{j\not\in I}\equiv 0$, and that the ideal free distribution $\ub^\infty=\ub^{\emptyset}$ is the unique coexistence state obtained by taking $I=\emptyset\Leftrightarrow \fb=0$.
 In fact our condition \eqref{eq:am4} is equivalent to the hypothesis that all the components $(u^I_j)_{j\not\in J}$ of those steady states are positive and uniformly bounded away from zero, cf. \cite{KMV16-2} (by definition the other components $(u^I_i)_{i\in I}$ vanish identically). 
 
 The partial extinction set is then the collection of all such stationary solutions $\ub^I(x)$ for all possible choices of $I\subset \{1,\ldots,N\}$ with $I\neq \emptyset$, of which there is a finite combinatorial number $p_N$:
$$
\mathbf U^{ext}=\{\ub^{ext,1}(x),\ldots,\ub^{ext,p_N}(x)\}=\{\ub^I(x):\, \emptyset\neq I\subset \{1,\ldots,N\}\}\subset H^1(\Omega)^N.
$$ 
The \emph{critical entropy} $E^*>0$ appearing in Theorem~\ref{T:lt1} is then defined as the minimal entropy among all the partial extinction states,
\begin{equation} \label{crent}
E^*=\min\left\{\E(\mathbf u^{ext}):\quad \ub^{ext}\in \mathbf U^{ext}\right\},
\end{equation}
and depends only on $A,\mb$ (and $\Omega$). 

 Though biologically admissible, the partial extinction states are actually degenerate points in our analysis: whenever $u_i(x)\equiv 0$, the formal Riemannian structure from Section~\ref{section:gradient_flow} degenerates since the $i$-th tangent plane $T_{u_i}\Mm=\{0\}$ becomes trivial, see in particular the  definition of tangent norms \eqref{eq:def_tangent_norm} in terms of $\|.\|_{H^1(\rd u)}$ norms.
As a consequence we will need to stay away from those points.
This is particularly clear in the following functional inequality, which will be the key to proving the long-time convergence below and follows from a more general Poincar\'e-Beckner inequality established by us in \cite{KMV16-2}:
\begin{theo}
\label{th:kmpv2}
Let $\mathbf U \subset H^1(\Omega, \mathbb R^N)$ be a set of 
functions such that
\begin{enumerate}[(i)]
\item
$\mathbf u \ge 0$ for any $\mathbf u \in \mathbf U$;
\item
no sequence $\{\mathbf u^k\}_{k\geq 0} \subset \mathbf U$ converges strongly in $L^q(\Omega)^N$ to any of the partial extinction states $\ub^{ext}\in \mathbf U^{ext}$ for some $q\in [1,2)$.
\end{enumerate}
Then there exists a constant $C_{\mathbf U}>0$  such that
\begin{equation}
\label{eq:kmpv1}
\forall\,\mathbf u \in \mathbf U:\qquad \int_\Omega \sum_{i=1}^N |f_{i}|^2 \, \mathrm dx
\le
C_{\mathbf U}
\int_\Omega \sum_{i=1}^N u_{i} (|f_{i}|^2+ | \nabla f_{i} |^2) \, \mathrm dx
.
\end{equation}
\end{theo}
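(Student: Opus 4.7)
The plan is to argue by contradiction combined with compactness, the core technical input being the more general vectorial Poincar\'e-Beckner inequality established in our companion work \cite{KMV16-2}. Suppose \eqref{eq:kmpv1} fails; then there exists a sequence $\{\ub^k\}_{k\geq 0}\subset \mathbf U$ which, after rescaling, satisfies the normalization
$$
\int_\Omega\sum_i |f_i^k|^2 = 1
\quad\text{while}\quad
\int_\Omega\sum_i u_i^k\bigl(|f_i^k|^2+|\nabla f_i^k|^2\bigr)\longrightarrow 0.
$$
Exploiting the algebraic relation $A\ub^k=\mb-\fb^k$ and the uniform ellipticity $A\geq \lambda_A\operatorname{Id}$, one immediately gets that $\ub^k$ is bounded in $L^2(\Omega)^N$; extracting a weakly convergent subsequence yields $\ub^k\rightharpoonup\ub^*$ and $\fb^k\rightharpoonup\fb^*=\mb-A\ub^*$ in $L^2$, with $\ub^*\geq 0$.

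The next step is to identify the weak limit. The weighted vanishing $\int u_i^k|f_i^k|^2\to 0$ should force, in the limit, $f_i^*$ to vanish on the support of $u_i^*$: setting $I:=\{i\,:\,u_i^*\equiv 0\}$, the limit must solve the linear system $u_i^*=0$ for $i\in I$ and $f_j^*=0$ for $j\notin I$, which by the nondegeneracy assumption \eqref{eq:am4} has a unique solution $\ub^*=\ub^I$ with $u_j^*\geq c_\kappa>0$ for all $j\notin I$. Two cases then arise: either $I\neq\emptyset$ and $\ub^*\in\mathbf U^{ext}$, or $I=\emptyset$ and $\ub^*=\ub^\infty$ with $\fb^*=0$.

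The main obstacle is to upgrade this weak convergence to strong $L^q$ convergence for some $q\in[1,2)$: since the dissipation $\int u_i^k|\nabla f_i^k|^2\to 0$ is weighted by a possibly vanishing density, it yields no straightforward $H^1$ control on $\fb^k$ or $\ub^k$, and classical Rellich compactness cannot be applied directly. This is precisely the content of the refined Poincar\'e-Beckner inequality in \cite{KMV16-2}: by splitting $\Omega$ according to whether each $u_i^k$ stays uniformly bounded away from zero or collapses to zero, one recovers genuine $H^1$ bounds (hence strong $L^q$ compactness) on the nondegenerate parts, while on the degenerate parts the pointwise vanishing of $u_i^k$ gives the $L^q$ convergence essentially for free. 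If $I\neq\emptyset$ this gives $\ub^k\to\ub^I\in\mathbf U^{ext}$ strongly in $L^q$, directly contradicting hypothesis (ii). The remaining case $I=\emptyset$ is ruled out because strong convergence $\ub^k\to\ub^\infty$ together with the uniform lower bound $u_i^\infty\geq c_\kappa>0$ would imply $\int |\fb^k|^2\to 0$ via the weighted dissipation $\int u_i^k|f_i^k|^2\to 0$, contradicting the normalization. This closes the argument.
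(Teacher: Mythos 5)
The paper itself does not prove Theorem~\ref{th:kmpv2}: it is imported wholesale from the companion work \cite{KMV16-2}, so invoking that reference is in principle consistent with the paper. But since you chose to sketch an actual compactness argument, the sketch has to stand on its own, and it has two genuine gaps. First, the normalization $\int_\Omega\sum_i|f_i^k|^2=1$ ``after rescaling'' is not available: the relation $\fb=\mb-A\ub$ is \emph{affine} in $\ub$, so $\fb$ does not scale linearly with $\ub$, and moreover $\mathbf U$ is an arbitrary set satisfying (i)--(ii), not a cone, so a rescaled $\ub^k$ need not belong to $\mathbf U$. The contradiction argument must instead start from $\int\sum|f_i^k|^2> k\int\sum u_i^k(|f_i^k|^2+|\nabla f_i^k|^2)$ as is, and one then has to handle separately the possibilities that the left-hand side stays bounded, blows up, or tends to zero; none of these is dispatched by your setup, and without the normalization you no longer get the $L^2$ bound on $\ub^k$ for free.

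Second, and more seriously, the passage from ``weighted dissipation tends to zero'' to ``strong $L^q$ convergence to some $\ub^I$'' \emph{is} the theorem, and your treatment of it is circular: you describe it as ``precisely the content of the refined Poincar\'e--Beckner inequality in \cite{KMV16-2}'' and assert that on the degenerate set the convergence comes ``essentially for free.'' It does not. The vanishing of $\int_\Omega u_i^k|f_i^k|^2$ gives no control whatsoever on $f_i^k$ (hence on $u_i^k=(A^{-1}(\mb-\fb^k))_i$) on the region where $u_i^k$ is small, and an $L^2$-bounded sequence $f_i^k$ can concentrate all of its mass on sets of vanishing measure where $u_i^k\approx 0$; ruling this out requires transporting information from $\{u_i^k\gtrsim 1\}$ via the gradient term, which is exactly the hard part of \cite{KMV16-2}. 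The same concentration phenomenon undermines your final step in the case $I=\emptyset$: strong $L^q$ convergence $\ub^k\to\ub^\infty$ with $q<2$ only gives $|\{u_i^k<c_\kappa/2\}|\to 0$, and to conclude $\int_\Omega|f_i^k|^2\to 0$ from $\int_\Omega u_i^k|f_i^k|^2\to 0$ you would additionally need equi-integrability of $|f_i^k|^2$, which is not established. In short, the skeleton (contradiction, weak compactness, identification of the limit via \eqref{eq:am4}, exclusion by hypothesis (ii)) is plausible and presumably close to the actual proof in \cite{KMV16-2}, but every analytically delicate step is either incorrectly set up or delegated to the very result being proved.
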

\begin{rmk} 
Apart from $\mathbf U$, the constant $C_{\mathbf U}$ also depends on  the upper bounds for $|\mathbf m|$ and $|A|$, on $\lambda_A$, and on $\kappa$ in \eqref{eq:am4}.
\end{rmk}

Condition (ii) means that the $\mathbf U$ must be 
separated from the finite set of partial extinction points $\mathbf U^{ext}$.
Moreover, this Poincar\'e-type inequality can be reinterpreted as an \emph{entropy-entropy production} inequality, as is common in the framework of gradient flows in Wasserstein spaces. 
Indeed from Section~\ref{section:gradient_flow} and the formal Riemannian structure the right-hand side is nothing but the dissipation $\mathcal D(\ub) =\|\grad_\D \mathcal E(\ub)\|^2_{T_\ub\Mm}$, and recalling $\E(\ub)\lesssim \|\fb \|^2_{L^2(\Omega)}$ from the coercivity, the left hand side controls $\E(\ub)=\E(\ub)-\E(\ub^\infty)$.
Thus \eqref{eq:kmpv1} gives the entropy-entropy dissipation control $\mathcal D(\ub)\geq C (\E(\ub)-\E(\ub^\infty))$, which classically implies convergence in the entropy sense.\\

We are now in position of proving the long-time convergence:
\begin{proof}[Proof of Theorem \ref{T:lt1}] Recalling the definition of the critical entropy \eqref{crent} and given a subcritical initial data
$$
\E(\ub^0)<E^*,
$$
we introduce the set
$$
\mathbf U:=\left\{\ub=(u_1,\ldots,u_N):\qquad
u_i\geq 0,\quad u_i\in H^1(\Omega),\quad \mathcal \E(\ub)\leq \E( \ub^0)
\right\}
$$
(depending only on $\ub^0$, $A$, $\mb$, and $\Omega$).
From the EDI \eqref{eq:EDI} we see that we have invariance
$$
\ub(0)=\ub^0\in \mathbf U \quad \Rightarrow \quad \ub(t)\in \mathbf U \text{ for a.a. }t\geq 0
$$
along the time-evolution, and we claim that $\mathbf U$ meets the assumptions of Theorem~\ref{th:kmpv2}.
To see this, assume by contradiction that there is a sequence $\ub^k\in \mathbf U$ such that $\ub^k\to \ub^{ext}$ strongly in $L^q(\Omega)$ for some partial extinction state $\ub^{ext}\in \mathbf U^{ext}$ and some $q\in [1,2)$.
Since $\frac{\lambda_A}{2}\|\ub^k-\ub^{\infty}\|^2_{L^2(\Omega)}\leq \E(\ub^k)\leq \E(\ub^0)$ we see that $\{\ub^k\}$ is bounded in $L^2(\Omega)$, and up to extraction of a subsequence we can therefore assume that $\ub^k\rightharpoonup \ub$ weakly in $L^2$ for some limit $\ub$.
By uniqueness of the limit we see that $\ub=\ub^{ext}$, and by lower semi-continuity
$$
\E(\ub^{ext})\leq \liminf\limits_{k\to\infty} \E(\ub^k)\leq \E(\ub^0)<E^*.
$$
This is impossible by the definition \eqref{crent} of $E^*$, which entails the claim.

From \eqref{e:15} and the coercivity $A\geq \lambda_A$ we recall that $\E(\ub)\leq C_A\int_{\Omega}|\fb|^2$.
We can therefore apply Theorem~\ref{th:kmpv2} in the Entropy-Dissipation-Inequality \eqref{eq:EDI} and conclude that there is $\gamma=\gamma_{\mathbf U}>0$, depending only on $\E(\ub^0)$ and the data, such that for a.e. $t\geq t_0\geq 0$ there holds
$$
\E(\ub(t))+\gamma\int_{t_0}^t\E(\ub(s))\rd s\leq \E(\ub(t))+\sum\limits_i\int_{t_0}^t\int_\Omega (|\nabla f_i|^2+|f_i|^2)u_i\leq \E(\ub(t_0)).
$$
Hence $t \mapsto \E(\ub(t))+\gamma\int_{0}^t\E(\ub(s))\rd s$ is monotone non-increasing, and therefore
$$
\frac{d\E}{dt}+\gamma\E\leq 0
$$
in the sense of scalar distributions $\mathcal D'(0,\infty)$.
This immediately implies the exponential decay \eqref{eq:din} for a.e $t$ by a standard Gr\"onwall argument. Finally, since $\ub\in \mathcal{C}_w([0,\infty); L^2(\Omega)^N)$, the function $t \mapsto \E(\ub(t))$ is lower semicontinuous, and \eqref{eq:din} extends to all $t\geq 0$.
\end{proof}
\begin{rmk}
From the biological perspective, the distributions $\ub^I$ ($I\neq \emptyset$) describe scenarios when some of the species $(u_i)_{i\in I}$ have died out, and the survivors $(u_j)_{j\not\in J}$ compose a (lower-dimensional) ideal free distribution. It is important to point out that these partial ideal free distributions (which we need to avoid to secure the entropy-entropy production inequality) are unstable and repulsive: if a small $L^\infty$ density of any of the extinct populations $(u_i)_{i\in I}$ is reintroduced, its fitness $f_i$ will be positive and bounded away from zero (see \cite{KMV16-2}), therefore the environment is favorable to that species and it will unlikely go extinct again. That is why we conjecture that \eqref{eq:din} holds for any $\ub^0\geq 0$ (unless some component of $\ub^0$ is identically zero).\end{rmk}

\section*{Acknowledgments}
SK and DV were partially supported by CMUC -- UID/MAT/00324/2013, funded by the Portuguese Government through FCT/MCTES and co-funded by the European Regional Development Fund through the Partnership Agreement PT2020.
LM was supported by the Portuguese National Science Foundation through through FCT fellowship SFRH/BPD/ 88207/2012 and by the UT Austin/Portugal CoLab project \emph{Phase Transitions and Free Boundary Problems}. DV was also supported by the Russian Science Foundation (14-21-00066 -- Voronezh State University). 
\section*{Appendix} 

Assume that $X\subset Y$ are two Hilbert spaces, with
continuous embedding operator $i:X\to Y$, and that $i(X)$ is dense in
$Y$.
The adjoint operator $i^*:Y^*\to X^*$ is continuous and,
since $i(X)$ is dense in $Y$, one-to-one. Since $i$ is one-to-one,
$i^*(Y^*)$ is dense in $X^*$, and one may identify $Y^*$ with a
dense subspace of $X^*$. Due to the Riesz representation theorem,
one may also identify $Y$ with $Y^*$. We arrive at the chain of
inclusions:
\begin{equation}
X\subset Y\equiv Y^*\subset X^*
\end{equation}
with dense and continuous embeddings.
Observe that in
this situation, for $f\in Y, u\in X$, their scalar product in $Y$
coincides with the $<X^*,X>$ duality
\begin{equation}
\label{ttt} (f,u)_Y=\langle f,u \rangle_{X^*,X}.
\end{equation}
Such triples $(X,Y, X^*)$ are called \emph{Hilbert triples} (sometimes also referred to as Gelfand or Lions triples), see, e.g., \cite{Te79,ZV08} for more details. 

\begin{lem} \label{l:ap} Let $$X\subset Y\subset X^*$$ be a Hilbert triple. Let $\mathcal{A}:X\to X^*$ be a linear continuous operator such that $$\langle \mathcal A u , u \rangle \geq \alpha \|u\|_X^2$$ for all $u\in X$ and some common $\alpha>0$. Let $V$ be a Banach space such that $$X\subset V\subset Y$$ where the first embedding is compact and the second is continuous. Assume that both $X$ and $V$ are separable. Let $$\mathcal{Q}: V\to X^*$$ be a continuous operator. Assume that \be\label{e:q1}\|\mathcal{Q}(u)\|_{X^*} \leq C (1+\|u\|_V)\ee for all $u \in X$. Then the Cauchy problem \begin{equation}\label{E:acp1} u^\prime(t) +\mathcal Au(t)=\mathcal Q(u(t)), \quad  u|_{t=0}=u_0,
\end{equation} has a solution in the class \be L^{2}(0,T;X)\cap H^{1}(0,T;X^*)\cap C([0,T];Y)\ee for every $u_0\in Y$. \end{lem}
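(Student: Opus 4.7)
The plan is a Faedo--Galerkin approximation followed by a compactness/passage-to-the-limit argument, the only non-routine step being the handling of the nonlinear $\mathcal Q$-term (which is merely continuous). Since $X$ is separable, fix an increasing sequence of finite-dimensional subspaces $X_n\subset X$ whose union is dense in $X$, and let $P_n$ denote the $Y$-orthogonal projection onto $X_n$, so that $P_n u_0 \to u_0$ in $Y$. For each $n$ I would seek $u_n:[0,T_n)\to X_n$ solving
\begin{equation*}
(u_n'(t), v)_Y + \langle \mathcal A u_n(t), v\rangle_{X^*,X} = \langle \mathcal Q(u_n(t)), v\rangle_{X^*,X},\qquad \forall v\in X_n,
\end{equation*}
with $u_n(0)=P_n u_0$. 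Writing $u_n$ in a basis of $X_n$ and using \eqref{ttt} to read off the Gram matrix, this is a finite-dimensional ODE system with continuous right-hand side (by continuity of $\mathcal Q:V\to X^*$ and the continuous inclusion $X_n\subset X\subset V$), so Peano's theorem furnishes a local $C^1$ solution.

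For the a priori estimates, test with $v=u_n(t)$; \eqref{ttt} together with the coercivity assumption yields
\begin{equation*}
\tfrac12\tfrac{d}{dt}\|u_n\|_Y^2 + \alpha\|u_n\|_X^2 \leq \|\mathcal Q(u_n)\|_{X^*}\|u_n\|_X \leq C(1+\|u_n\|_V)\|u_n\|_X.
\end{equation*}
Applying Young's inequality and then Ehrling's interpolation lemma $\|u_n\|_V^2\leq\eps\|u_n\|_X^2+C_\eps\|u_n\|_Y^2$ (valid since $X\subset\subset V\subset Y$) lets one absorb all $\|u_n\|_X^2$-contributions on the left for $\eps$ small, leaving $\tfrac{d}{dt}\|u_n\|_Y^2+c\|u_n\|_X^2\leq C(1+\|u_n\|_Y^2)$. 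Gr\"onwall then yields uniform bounds for $u_n$ in $L^\infty(0,T;Y)\cap L^2(0,T;X)$ on any fixed $[0,T]$ (hence global existence of the Galerkin solutions), and the equation itself, together with \eqref{e:q1} and the continuity of $\mathcal A$, controls $u_n'$ in $L^2(0,T;X^*)$.

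The compact embedding $X\subset\subset V$ combined with the continuous $V\subset X^*$ allows an application of the Aubin--Lions--Simon lemma, which yields along a subsequence $u_n\to u$ strongly in $L^2(0,T;V)$ and a.e.\ in $t$, weakly in $L^2(0,T;X)$, with $u_n'\rightharpoonup u'$ weakly in $L^2(0,T;X^*)$. The linear $\mathcal A$-term passes to the limit immediately. The \emph{main obstacle} is the nonlinear $\mathcal Q$-term, since no Lipschitz structure is available: I would combine the pointwise a.e.\ convergence $\mathcal Q(u_n(t))\to\mathcal Q(u(t))$ in $X^*$ (obtained from a further subsequence converging a.e.\ in $V$ and the continuity of $\mathcal Q$) with the pointwise bound $\|\mathcal Q(u_n(t))\|_{X^*}^2\leq 2C^2(1+\|u_n(t)\|_V^2)$, and with the $L^1(0,T)$ convergence $\|u_n\|_V^2\to\|u\|_V^2$ coming from the strong $L^2(0,T;V)$ convergence, to invoke Pratt's generalized dominated convergence theorem and conclude $\mathcal Q(u_n)\to\mathcal Q(u)$ strongly in $L^2(0,T;X^*)$.

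Passing to the limit in the Galerkin identity then gives $u'+\mathcal A u=\mathcal Q(u)$ in $X^*$ for a.e.\ $t$, with $u\in L^2(0,T;X)\cap H^1(0,T;X^*)$. The Lions--Magenes embedding $L^2(0,T;X)\cap H^1(0,T;X^*)\hookrightarrow \mathcal C([0,T];Y)$ then supplies the required $Y$-continuity, and the initial condition $u(0)=u_0$ is identified from weak continuity at $t=0$ combined with $u_n(0)=P_n u_0 \to u_0$ in $Y$.
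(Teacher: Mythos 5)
The paper does not actually prove Lemma \ref{l:ap}: it only cites \cite{SV16}, \cite[Section 6.3]{ZV08} and \cite[Section 4]{V9}, where a more general statement is obtained by topological approximation methods, treating $\mathcal Q$ as a compact perturbation of the coercive linear part. Your Faedo--Galerkin construction is therefore a genuinely independent and more elementary route, and its key non-routine ingredients are sound: Peano (rather than Picard) for the finite-dimensional systems, since $\mathcal Q$ is merely continuous; Ehrling's interpolation through the chain $X\subset\subset V\subset Y$ to close the energy estimate even though the growth of $\mathcal Q$ is measured in $\|\cdot\|_V$; and the combination of a.e.\ convergence in $V$ with Pratt's generalized dominated convergence theorem to pass to the limit in the $\mathcal Q$-term in the absence of any Lipschitz or monotonicity structure. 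This is exactly the right way to exploit the stated hypotheses.

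One step needs repair. With an arbitrary increasing family $X_n$ and the $Y$-orthogonal projections $P_n$, the claim that ``the equation itself controls $u_n'$ in $L^2(0,T;X^*)$'' does not follow: for $v\in X$ one only has $\langle u_n'(t),v\rangle_{X^*,X}=(u_n'(t),v)_Y=(u_n'(t),P_nv)_Y=\langle -\mathcal A u_n+\mathcal Q(u_n),P_nv\rangle_{X^*,X}$, so the estimate requires $\sup_n\|P_n\|_{\mathcal L(X,X)}<\infty$, which can fail for a general choice of subspaces. The standard fix is available precisely because of the hypotheses: the embedding $X\hookrightarrow Y$ is compact (being the composition $X\subset\subset V\subset Y$), so one may choose a Galerkin basis that is orthogonal in $X$ and orthonormal in $Y$ (eigenfunctions of the compact self-adjoint operator on $Y$ representing the $X$-inner product); then $P_n$ coincides with the $X$-orthogonal projection and has norm $1$ on $X$, the $L^2(0,T;X^*)$ bound on $u_n'$ follows, and the Aubin--Lions--Simon compactness goes through. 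With this modification your argument is complete.
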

\bibliographystyle{plain}

We omit the proof since a more general statement is proven in  \cite{SV16}, cf. also  \cite[Section 6.3]{ZV08}, \cite[Section 4]{V9}. 

\end{document}